\documentclass[10pt,a4paper]{amsart}
\usepackage{amsthm}
\usepackage{amsmath}
\usepackage{amssymb}
\usepackage{amsfonts}
\usepackage{graphicx}
\usepackage{enumerate}

\title{Mekler's construction and tree properties}
\author{JinHoo Ahn}
\address{Department of Mathematics Yonsei University 50 Yonsei-Ro Seodaemun-Gu, Seoul 03722, South Korea}
\email{jinu1229@yonsei.ac.kr}
\thanks{The author was supported by Samsung Science Technology Foundation under Project Number SSTF-BA1301-03, and has been supported by an NRF of Korea grant 2018R1D1A1A02085584}

\begin{document}

\maketitle

 \theoremstyle{plain}
\newtheorem{Thm}{Theorem}[section]
\newtheorem{Lemma}[Thm]{Lemma}
\newtheorem{Cor}[Thm]{Corollary}
\newtheorem{Prop}[Thm]{Proposition}
\newtheorem{Claim}[Thm]{Claim}

 \theoremstyle{definition}
\newtheorem{Def}[Thm]{Definition}
\newtheorem{Conj}[Thm]{Conjecture}
\newtheorem{Rem}[Thm]{Remark}
\newtheorem{Prb}[Thm]{Problem}
\newtheorem{Ex}[Thm]{Example}
\newtheorem{Fact}[Thm]{Fact}

\begin{abstract}
    Mekler developed a way to produce a pure group from any given structure where the construction preserves $\kappa$-stability for any cardinal $\kappa$. Not only the stability, it is known that his construction preserves various model-theoretic properties such as simplicity, NIP, and NTP$_2$. Inspired by the last result, we show that the construction also preserves NTP$_1$(NSOP$_2$) and NSOP$_1$. As a corollary, we obtain that if there is a theory of finite language which is non-simple NSOP$_1$, or which is NSOP$_2$ but has SOP$_1$, then there is a pure group theory with the same properties, respectively.
\end{abstract}
\vspace{10pt}
\textbf{\textit{Keywords---}} NTP$_1$, NSOP$_1$, tree indiscerniblility, Mekler's construction
\vspace{20pt}

\section{Introduction}
Suppose a structure $M$ of finite language has more than one element, then there is a graph $N$ which is bi-interpretable with $M$ \cite[Thm 5.5.1]{Hodges}. 
This implies if $M$ has a model-theoretic property like stability and simplicity, then one can find a graph which has the same properties of $M$.
Unlike the graph, it is not easy to see whether there is a group which preserves a model-theoretic property of $M$.
A partial answer to this problem was found by Mekler. In \cite{Mekler}, he constructed a group $G$ where Th$(G)$ has the same stability spectrum as Th$(M)$. 
This group is not bi-interpretable with $M$, however, so it does not preserve all the properties of $M$. For example, even though $M$ is $\aleph_0$-categorical, the group $G$ may not be.
\par
Later, it is proved that many other properties related with Shelah's classification program are preserved by Mekler's construction.
Baudisch and Pentzel proved that simplicity is preserved by the construction, and assuming stability, Baudisch proved that CM-triviality is also preserved \cite{Baudisch}. 
Recently, Chernikov and Hempel proved that the construction preserves NIP, $k$-dependence, and NTP$_2$ \cite{Chernikov Hempel}. 
Thus, it is natural to expect that the construction preserves NSOP$_1$ and other non-simple theories \cite[Conjecture 1]{Chernikov Hempel}.
\par
In this paper we show that the conjecture is true for the following tree properties; NTP$_1$(NSOP$_2$) and NSOP$_1$. 
To prove them, we analogously follow the argument used in the proof of preservation of NTP$_2$ in \cite{Chernikov Hempel}. 
The difference is that parameters witnessing TP$_2$ formula is an array, not a tree. Hence, we find an appropriate generalized indiscernibility for each properties substituting the role of mutual indiscernibility.
We use one of the tree indiscernibility, called strong indiscernibility (see Definition \ref{i indxd indisc}) \cite{KKS, Takeuchi Tsuboi}.
\par
SOP$_1$ formula also have parameters of a tree, but the strong indiscernibility is not much helpful. 
We recall \cite[Proposition 2.4]{Kaplan Ramsey}, the equivalent conditions of NSOP$_1$, to obtain parameters of array $\omega\times2$ called SOP$_1$-array (see Definition \ref{comb indiscernible}).
\par
We preview the corollaries of the main results.
\begin{Cor}
 \begin{enumerate}
     \item There is a non-simple NSOP$_1$ pure group theory.
     \item If there is an NSOP$_2$ theory which has SOP$_1$, then there is a pure group theory with the same properties.
 \end{enumerate}
\end{Cor}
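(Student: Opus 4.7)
The plan is to stack three ingredients: a starting theory with the target combination of properties in a finite language, Hodges' bi-interpretation with a graph, and the two main theorems of this paper (together with the standard fact that the defining graph is interpretable in the Mekler group).

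For part (1), I would take $T$ to be any concrete non-simple NSOP$_1$ theory in a finite language, e.g.\ $T^*_{feq}$ (the model companion of the theory of parametrized equivalence relations), which is a well-known non-simple NSOP$_1$ example and is axiomatized in a finite language. For part (2), the hypothesis itself supplies such a theory $T$ in finite language. By \cite[Thm 5.5.1]{Hodges}, $T$ is bi-interpretable with the theory of a graph $N$; since NSOP$_1$, NSOP$_2$, SOP$_1$ and (non-)simplicity are invariants of bi-interpretation (any SOP$_1$ configuration or non-simple tree in one theory pulls back or pushes forward through the interpretations), $N$ inherits the target combination of properties.

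Now apply Mekler's construction to $N$ to obtain a pure group $G$. The two main theorems of the paper give NSOP$_1$ (case (1)) or NSOP$_2$ (case (2)) for $G$ directly. For the complementary property --- non-simplicity for (1), SOP$_1$ for (2) --- I would use that $N$ remains interpretable in $G$, which is a standard feature of Mekler's construction (explicit in Mekler's original paper and used throughout the subsequent literature). Any formula of $N$ together with its tree of parameters witnessing the tree property, respectively SOP$_1$, lifts through this interpretation to a formula of $G$ witnessing the same combinatorial configuration with the lifted tree of parameters. Hence $G$ is non-simple, respectively has SOP$_1$, while retaining the relevant ``N-'' property from the main theorems.

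The real work of the corollary sits entirely in the main theorems of the paper; the argument above is essentially bookkeeping. The only mild point that I expect to be a possible obstacle is to pin down, once and for all, the interpretability of $N$ inside the Mekler group $G$ so that the ``bad'' direction of the transfer is legitimate; this is standard but worth making explicit, since it is precisely the asymmetry (graph interpretable in group, but not vice versa) that both enables the transfer here and explains why the construction fails to preserve properties such as $\aleph_0$-categoricity.
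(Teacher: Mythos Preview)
Your approach matches the paper's: take a finite-language theory with the target properties, pass to a bi-interpretable nice graph via Hodges, apply Mekler's construction, and use the main theorems for the NSOP side together with interpretability of the graph in the group (equivalently, the easy direction of the known preservation of simplicity) for the non-simple/SOP$_1$ side. Two minor omissions worth patching: the graph must be \emph{nice} for Mekler's construction to apply (the paper invokes \cite[Exercise~5.5.9]{Hodges} for this refinement), and for part~(2) the hypothesis does not hand you a finite-language theory outright---you should first reduce to the finitely many symbols occurring in an SOP$_1$ witness, noting that NSOP$_2$ passes to reducts.
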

The first one is obtain by the preservation of simplicity and NSOP$_1$. 
Any example of non-simple NSOP$_1$ theory on finite language can be transformed into a pure group by the construction. 
Similarly, any NSOP$_2$ theory with SOP$_1$ on finite language can be transformed into a pure group.
Thus, NSOP$_1$ and NSOP$_2$ are equivalent if and only if they are equivalent on the pure group theories.
\par
The paper is organized as follows.
In section 2, we introduce the notions about strong indiscerniblility on trees from \cite{KKS} and \cite{Takeuchi Tsuboi}. 
In section 3, using strong indiscernibility, we find equivalent conditions of NTP$_1$. And then, we define an SOP$_1$-array and find equivalent conditions of NSOP$_1$. 
In section 4, we describe and summarize definitions and facts of Mekler's construction following by \cite{Hodges} and \cite{Chernikov Hempel}. 
In section 5, we first observe some combinatorial remarks on trees in \cite{Dzamonja Shelah}, then show our main results that Mekler's construction preserves NTP$_1$ and NSOP$_1$.

\section{Tree indiscernibility}

Consider a tree $^{<\lambda}\kappa$ of height $\lambda$ which has $\kappa$ many branches. Each element in the tree can be regarded as a string. We denote $\langle\rangle$ as an empty string, $0^\alpha$ as a string of $\alpha$ many zeros, and $\alpha$ as a string $\langle\alpha\rangle$ of length one.

\begin{Def} \label{tree lang}
Fix a tree $^{<\lambda}\kappa$, and let $\eta, \nu, \xi \in \null^{<\lambda}\kappa$.\par
\begin{enumerate}[(1)]
 \item (Ordering) $\eta \lhd \nu$ if $\nu \lceil \alpha = \eta$ for some ordinal $\alpha \in$ dom($\nu)$.
 \item (Meet) $\xi = \eta \wedge \nu$ if $\xi$ is the meet of $\eta$ and $\nu$, i.e., $\xi=\eta \lceil \beta$, when $\beta = \bigcup \{ \alpha \leq $ dom$(\eta) \cap $dom$(\nu) \hspace{1mm}| \hspace{1mm} \eta \lceil \alpha=\nu \lceil \alpha \}$. For $\bar{\eta}\in\null^{<\lambda}\kappa$, $\bar{\nu}$ is the meet closure of $\bar{\eta}$ if $\bar{\nu}=\{ \eta_1 \wedge \eta_2 | \eta_1, \eta_2 \in \bar{\eta} \}$.
 \item (Incomparability) $\eta \bot \nu$ if they are $\unlhd$-incomparable, i.e., $\neg (\eta\unlhd\nu)$ and $\neg(\nu\unlhd\eta)$.
 \item (Lexicographic order) $\eta<_{lex}\nu$ if
  \begin{itemize}
   \item[(a)] $\eta \lhd \nu$, or
   \item[(b)] $\eta \bot \nu$ and $\exists \alpha (\eta \lceil \alpha=\nu \lceil \alpha \hspace{1mm}$and$\hspace{1mm} \eta(\alpha)<\nu(\alpha))$.
  \end{itemize}
\end{enumerate}
\end{Def}

\begin{Def} \label{str lang}
A \emph{strong language} $L_0$ is defined by the collection $\{ \lhd, \wedge, <_{lex} \}$ \par
We may view the tree $^{<\lambda}\kappa$ as an $L_0$-structure.
\end{Def}

\begin{Def}
 A tree $B=\null^{<\lambda'}\kappa'$ is called a \emph{subtree} of $A=\null^{<\lambda}\kappa$ if $B\subseteq A$ and the inclusion map is an embedding in the language $\{\unlhd, <_{lex}\}$.
\end{Def}

 Fix a complete first order theory $T$(with language $L$). Let $\mathcal{M} \models T$ be a monster model. From now on, we will work in this $\mathcal{M}$.
 
\begin{Def} \cite{KKS} \label{i indxd indisc}
Fix a structure $\mathcal{I}$ with language $L_{\mathcal{I}}$. For a set $\{b_i | i \in \mathcal{I}\}$, we say it is \emph{$\mathcal{I}$-indexed indiscernible} if for any finite $\bar{i}$ and $\bar{j}$ from $\mathcal{I}$,
 \begin{center}
  qftp($\bar{i})_\mathcal{I}$ = qftp($\bar{j})_\mathcal{I}$ $\Rightarrow$ $(b_i)_{i\in \bar{i}} \equiv (b_j)_{j\in \bar{j}}$.
 \end{center}
 \quad $\mathcal{I}$ is called the index structure. In particular, we say a set $\{b_\eta | \eta \in \null^{<\lambda}\kappa\}$ is \emph{strongly indiscernible} if it is $\mathcal{I}$-indexed indiscernible for $\mathcal{I}$ the $L_0$-structure on $^{<\lambda}\kappa$.
\end{Def}

\begin{Rem} \label{str indisc prop}
Let $\{ a_\eta | \eta \in \null^{<\lambda} \kappa \}$ be a strongly indiscernible tree.
 \begin{enumerate}[(1)]
     \item For all $\nu_1 , \nu_2 \in \null^\lambda \kappa$, $(a_{\nu_1 \lceil \alpha })_{\alpha < \lambda} \equiv (a_{\nu_2 \lceil \alpha })_{\alpha < \lambda}$ 
     \item For all $\bar{\eta_1}, \bar{\eta_2} \in \null^{<\lambda} \kappa$, if $\bar{\nu_i}$ is the meet-closure of $\bar{\eta_i}$ for each $i=1,2$, then qftp($\bar{\eta_1}$) = qftp($\bar{\eta_2}$) implies qftp($\bar{\nu_1}$) = qftp($\bar{\nu_2}$)
     \item For all $\eta \bot \nu \in \null^{<\lambda}\kappa$, and $\xi\in\null^{<\lambda}\kappa$, $\eta<_{lex}\nu \Rightarrow a_\eta a_\nu \equiv a_{\xi^\frown 0} a_{\xi^\frown 0}$
     \item For any $\eta \in \null^{<\lambda} \kappa$, the tree $(a_{0^\frown \eta})_{\eta \in \null^{<\lambda} \kappa} $ is strongly indiscernible over $(a_{\nu_1 \lceil \alpha })_{\alpha \in dom(\eta)}$
 \end{enumerate}
\end{Rem}
\begin{proof}
See \cite{Chernikov Ramsey} and \cite{KKS}.
\end{proof}

\begin{Def}  \cite{KKS} \label{EM and basedness}
Let $\mathcal{I}$ be an index structure.
 \begin{enumerate}[(1)]
     \item The \emph{EM-type} of a set of parameters $A=\{ a_i \,|\, i \in \mathcal{I} \}$, EM$_{\mathcal{I}}(A)$, is the collection of formulas $\varphi(x_{i_1}, \dots , x_{i_n})$ in $L$ with variables $\{ x_i \,|\, i \in \mathcal{I} \}$ such that for all $j_1, \dots , j_n \in \mathcal{I}$, if $j_1 \dots j_n\equiv^{qf}i_1 \dots i_n$, then $\models\varphi(a_{j_1}, \dots , a_{j_n})$.
    \item A set $B=\{ b_\eta \,|\, \eta \in \mathcal{I} \}$ is \emph{based on} a set $A=\{ a_\nu \,|\, \nu \in \mathcal{I} \}$  if for all $\eta_1, \dots , \eta_n \in \mathcal{I}$ and for all $\varphi(x_{\eta_1}, \dots , x_{\eta_n})$ in $L$, there exists some $\nu_1, \cdots , \nu_n \in \mathcal{I}$ such that \par
     \begin{itemize}
         \item[(a)] $\nu_1 \dots \nu_n\equiv^{qf}_\mathcal{I} \eta_1 \dots \eta_n$, and
         \item[(b)] $b_{\eta_1} \dots b_{\eta_n} \equiv_\varphi a_{\nu_1} \dots a_{\nu_n}$
     \end{itemize}
     \quad In particular, when $\mathcal{I}$ is $L_0$-structure $\null^{<\lambda}\kappa$, we say $B$ is \emph{strongly based on} $A$ whenever $B$ is based on $A$.
 \end{enumerate}
\end{Def}

\begin{Rem} \label{EM eq basedness}
Let $B=\{ b_\eta \,|\, \eta \in \mathcal{I} \}$ and $A=\{ a_\nu \,|\, \nu \in \mathcal{I} \}$. Then $B$ is based on $A$ if and only if $B\models$EM$_{\mathcal{I}}(A)$.
\end{Rem}

\begin{Def} \label{modeling property}
For an index structure $\mathcal{I}$, we say $\mathcal{I}$-indexed indiscernibles have the \emph{modeling property} if given any $A=\{ a_\nu | \nu \in \mathcal{I} \}$, there is an $\mathcal{I}$-indexed indiscernible $B=\{ b_\eta | \eta \in \mathcal{I} \}$ such that $B$ is based on $A$(or equivalently, $B\models$EM$_{\mathcal{I}}(A)$).
\end{Def}

\begin{Fact} \cite{Takeuchi Tsuboi} \label{str indisc have MP}
Let $^{<\omega}\omega$ be the universe of the index structure. The strong indiscernibles have the modeling property.
\end{Fact}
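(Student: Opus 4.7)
The plan is to build a strongly indiscernible tree $B=(b_\eta)_{\eta \in {}^{<\omega}\omega}$ based on $A=(a_\nu)_{\nu \in {}^{<\omega}\omega}$ by combining compactness with a Ramsey-theoretic stabilization on the tree. First, I would encode the desired conclusion as a partial type $\Sigma\bigl((x_\eta)_{\eta \in {}^{<\omega}\omega}\bigr)$ containing, on the one hand, the strong-indiscernibility axioms $\varphi(x_{\bar\eta}) \leftrightarrow \varphi(x_{\bar\nu})$ for every $L$-formula $\varphi$ and every pair of finite tuples $\bar\eta,\bar\nu$ from ${}^{<\omega}\omega$ with the same quantifier-free $L_0$-type, and on the other hand the basedness axioms, namely every formula belonging to $\mathrm{EM}_{\mathcal{I}}(A)$. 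By Remark \ref{EM eq basedness}, any realization of $\Sigma$ is simultaneously strongly indiscernible and based on $A$, so it suffices to prove that $\Sigma$ is consistent.

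By compactness, the task reduces to realizing an arbitrary finite subset $\Sigma_0 \subseteq \Sigma$. Such $\Sigma_0$ mentions only finitely many variables $x_{\eta_1},\dots,x_{\eta_k}$, which (after taking meet-closures) can be assumed to form a finite subtree $F \subseteq {}^{<\omega}\omega$, and finitely many $L$-formulas $\varphi_1,\dots,\varphi_m$. Realizing $\Sigma_0$ then amounts to producing a tuple $(c_\eta)_{\eta \in F}$ from $\mathcal{M}$ such that (i) for every sub-tuple the truth value of each $\varphi_j$ depends only on the quantifier-free $L_0$-type of the indices, and (ii) each of these truth values agrees with the value of $\varphi_j$ on some $F$-shaped sub-tuple from $A$.

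To produce such a tuple I would search inside $A$ itself. For each quantifier-free $L_0$-type $\sigma$ realized in $F$ and each $\varphi_j$, I color the set of $\sigma$-shaped tuples from ${}^{<\omega}\omega$ by the truth value of $\varphi_j$ evaluated on the corresponding sub-tuple of $A$. Applying a tree-Ramsey theorem (Milliken's theorem on strong subtrees of ${}^{<\omega}\omega$, or equivalently a level-by-level iteration of the classical infinite Ramsey theorem) finitely many times, I find a strong subtree $T' \subseteq {}^{<\omega}\omega$ on which every one of these finitely many colorings is constant. Any strong embedding $\iota : F \hookrightarrow T'$ then yields an $F$-shaped tuple from $A$ realizing $\Sigma_0$, by setting $c_\eta := a_{\iota(\eta)}$. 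The main obstacle is the tree-Ramsey input: it must be invoked in a form that preserves the full strong-language structure (meet, tree order, and lexicographic order) simultaneously, which is exactly what the notion of \emph{strong subtree} is designed to provide. Once the Ramsey step is packaged correctly, the compactness step above closes the argument, yielding the modeling property of strong indiscernibles on ${}^{<\omega}\omega$ as in Takeuchi--Tsuboi.
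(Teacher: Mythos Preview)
The paper does not give its own proof of this statement: it is recorded as a \emph{Fact} with a citation to Takeuchi--Tsuboi and nothing more. So there is no in-paper argument to compare against; your sketch is a genuine proof outline where the paper simply defers to the literature.

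That said, your outline is the standard route and matches the strategy in the cited source: reduce by compactness to realizing finitely many indiscernibility and EM-type constraints, then find the witnesses inside $A$ by a tree-Ramsey stabilization. Two points are worth flagging for when you flesh it out. First, Milliken's theorem is most cleanly stated for finitely branching trees, while here the index tree ${}^{<\omega}\omega$ has infinite branching; the version you need (or the ``level-by-level iteration of Ramsey'' you allude to) has to be formulated so that it handles, at each level, colorings of \emph{finite} tuples of siblings rather than the full set of successors. Second, you correctly note that the Ramsey object must respect all of $\unlhd$, $\wedge$, and $<_{lex}$ simultaneously; in practice this means one works with colorings of $L_0$-embeddings of a fixed finite meet-closed configuration $F$ and shows that the age of $({}^{<\omega}\omega, L_0)$ is a Ramsey class (this is the content packaged in Takeuchi--Tsuboi and in Kim--Kim--Scow). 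Once that Ramsey input is in place, your compactness reduction closes the argument exactly as you describe.
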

As a corollary, we may assume the index structure as $^{<\lambda}\kappa$ where both $\lambda$ and $\kappa$ are infinite cardinals.

\section{Tree properties}

\begin{Def} \label{k dist sib}
We say a subset $\{ \eta_i \,|\, i<k \} \subseteq \null^{<\lambda}\kappa$ is a collection of \emph{$k$ distant siblings} if given $i_1\neq i_2$ and $j_1\neq j_2$, all of which are less than $k$, $\eta_{i_1}\wedge\eta_{i_2}=\eta_{j_1}\wedge\eta_{j_2}$.
\end{Def}

We may extend the result in Proposition \ref{str indisc prop}(3).
\begin{Rem} \label{dist sib eq to 0 to k-1}
Let $\{ a_\eta \,|\, \eta \in \null^{<\lambda} \kappa \}$ be a strongly indiscernible tree. For any collection of distant siblings $\{ \eta_i \,|\, i<k \} \subseteq \null^{<\lambda}\kappa$ and for any $\xi\in\null^{<\lambda}\kappa$, if $\eta_i<_{lex}\eta_j$ for each $i<j<k$ then $a_{\eta_0}\cdots a_{\eta_{k-1}} \equiv a_{\xi^\frown 0}\cdots a_{\xi^\frown k-1}$
\end{Rem}

\begin{Def} \cite{Dzamonja Shelah, KKS, Takeuchi Tsuboi} \label{tree properties}
Fix $k\geq2$.
 \begin{enumerate}[(1)]
     \item $\varphi(x;y)$ has SOP$_1$ if there is a $(a_\eta \,|\, \eta \in \null^{<\omega}2 )$ such that 
      \begin{itemize}
          \item[(a)] For all $\eta\in\null^{\omega}2$, $\{ \varphi(x;a_{\eta \lceil \alpha}) \,|\, \alpha<\omega \}$ is consistent,
          \item[(b)] For all $\xi, \nu \in \null^{<\omega}2$, if $\xi^\frown0\unlhd\nu$, then $\{ \varphi(x;a_{\xi^\frown1 }), \varphi(x;a_{\nu}) \}$ is inconsistent.
      \end{itemize}
     \item $\varphi(x;y)$ has SOP$_2$ if there is a $(a_\eta \,|\, \eta \in \null^{<\omega}2 )$ such that 
      \begin{itemize}
          \item[(a)] For all $\eta\in\null^{\omega}2$, $\{ \varphi(x;a_{\eta \lceil \alpha}) \,|\, \alpha<\omega \}$ is consistent,
          \item[(b)] For all $\xi, \nu \in \null^{<\omega}2$, if $\xi\perp\nu$, then $\{ \varphi(x;a_{\xi}), \varphi(x;a_{\nu}) \}$ is inconsistent.
      \end{itemize}
     \item $\varphi(x;y)$ has the tree property of the first kind (TP$_1$) if there is $(a_\eta \,|\, \eta \in \null^{<\omega} \omega )$ such that 
      \begin{itemize}
          \item[(a)] For all $\eta\in\null^{\omega} \omega$, $\{ \varphi(x;a_{\nu \lceil \alpha}) | \alpha<\omega \}$ is consistent,
          \item[(b)] For all $\eta \bot \nu \in \null^{<\omega}\omega$, $\{ \varphi(x;a_{\eta }), \varphi(x;a_{\nu}) \}$ is inconsistent.
      \end{itemize}
     \item $\varphi(x;y)$ has weak $k$-TP$_1$ if there is $(a_\eta \,|\, \eta \in \null^{<\omega} \omega )$ such that 
      \begin{itemize}
          \item[(a)] For all $\eta\in\null^{\omega} \omega$, $\{ \varphi(x;a_{\nu \lceil \alpha}) | \alpha<\omega \}$ is consistent,
          \item[(b)] For any collection of distant siblings $\{ \eta_i \,|\, i<k \}$, $\{ \varphi(x;a_{\eta_i}) \,|\, i<k \}$ is inconsistent.
      \end{itemize}
     \item We say $T$ has TP$_1$ (resp. SOP$_1$, SOP$_2$) if there is a formula having TP$_1$ (resp. SOP$_1$, SOP$_2$). If not, we say $T$ is NTP$_1$ (resp. NSOP$_1$, NSOP$_2$). We say $T$ has weak-TP$_1$ if there is a formula having $k$-TP$_1$ for some $k$.
 \end{enumerate}
\end{Def}

\begin{Fact} \cite{Chernikov Ramsey} \label{weak ktp1 eq to tp1} 
\begin{enumerate}
    \item $\varphi(x;y)$ has TP$_1$ if and only if there is a strongly indiscernible tree $(a_\eta\,|\,\eta\in \null^{<\omega}\omega)$ such that \par
    \begin{enumerate} [(a)]
        \item $\{\varphi(x,a_{\eta  \lceil \alpha})\,|\, \alpha<\omega\}$ is consistent for some $\eta\in ^\omega \omega$.
        \item $\{\varphi(x,a_{\nu^\frown i})\,|\,i<\omega\}$ is pairwise inconsistent for some $\nu\in\null^{<\omega}\omega$.
    \end{enumerate}
        \item $T$ has weak-TP$_1$ if and only if $T$ has TP$_1$.
\end{enumerate}
\end{Fact}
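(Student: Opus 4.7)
The plan is to handle the two clauses separately, using throughout the modeling property (Fact~\ref{str indisc have MP}) together with the structural consequences of strong indiscernibility collected in Remark~\ref{str indisc prop} and Remark~\ref{dist sib eq to 0 to k-1}.

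For (1), in the forward direction I would feed a TP$_1$-witness $(a_\eta)_{\eta \in \null^{<\omega}\omega}$ into the modeling property to produce a strongly indiscernible tree $(b_\eta)$ based on $(a_\eta)$. The branch-consistency formula $\exists x \bigwedge_{\alpha \in F} \varphi(x;y_\alpha)$ (for each finite $F$) and the sibling-pair inconsistency formula $\neg \exists x\bigl(\varphi(x;y_0) \wedge \varphi(x;y_1)\bigr)$ (applied at any sibling pair $\eta^\frown i, \eta^\frown j$, which is incomparable in the original TP$_1$ tree) both hold in $(a_\eta)$, hence both lie in the EM-type and pass to $(b_\eta)$, yielding the required (a) and the pairwise-inconsistency part of (b). Conversely, given a strongly indiscernible $(a_\eta)$ satisfying (a) and (b), I would invoke Remark~\ref{str indisc prop}(1) to propagate consistency from the distinguished branch to every branch, and Remark~\ref{str indisc prop}(3) (reading the statement as $a_\eta a_\nu \equiv a_{\xi^\frown 0} a_{\xi^\frown 1}$) to show that every incomparable lex-ordered pair has the same type as a sibling pair of the form $(a_{\nu^\frown 0}, a_{\nu^\frown 1})$, thereby converting the sibling pairwise inconsistency into the full TP$_1$ incomparable-pair inconsistency.

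For (2), the direction $(\Leftarrow)$ is immediate because TP$_1$ is exactly weak $2$-TP$_1$. For $(\Rightarrow)$, after applying the modeling property I may assume a weak $k$-TP$_1$ witness $(a_\eta)$ is strongly indiscernible, and Remark~\ref{dist sib eq to 0 to k-1} then makes $\{\varphi(x;a_{\eta_i})\}_{i<k}$ inconsistent on \emph{every} collection of $k$ distant siblings. To collapse $k$-wise inconsistency down to $2$-wise, I would pass to the formula $\psi(x; y_0, \ldots, y_{k-2}) := \bigwedge_{j<k-1} \varphi(x;y_j)$ and construct a new tree $(b_\eta)_{\eta \in \null^{<\omega}\omega}$ whose entries are $(k-1)$-clusters of old-tree nodes, designed so that (i) along every branch of the new tree all cluster elements lie on a single branch of the old tree (giving branch consistency of $\psi$ from that of $\varphi$), and (ii) at any two incomparable new nodes the union of the two clusters contains at least $k$ distant siblings of the old tree (giving $2$-wise inconsistency of $\psi$).

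The main obstacle is condition (ii): the rigid definition of distant siblings (Definition~\ref{k dist sib}) demands that all pairwise meets coincide, and this uniformity must survive forming unions of clusters across arbitrary incomparable new nodes, which is not automatic from a naive embedding. The natural candidate is an embedding of $\null^{<\omega}\omega$ into itself in which each step of the new tree traverses $k-1$ consecutive branching levels of the old tree, producing sparse clusters of $k-1$ siblings; carrying out the verification of (ii) in full rigor will require the tree-combinatorial lemmas of \cite{Dzamonja Shelah} that the paper revisits in Section~5.
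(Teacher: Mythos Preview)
The paper does not supply its own proof of this statement: it is recorded as a \emph{Fact} with citation to \cite{Chernikov Ramsey} and no argument is given, so there is nothing in the present paper to compare your proposal against.

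That said, your treatment of clause~(1) is the standard one and is correct in outline: the modeling property transfers branch-consistency and sibling-pair-inconsistency through the EM-type, and Remarks~\ref{str indisc prop}(1),(3) handle the converse.

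For clause~(2), however, there is a genuine gap precisely where you locate it. Your clustering construction requires that for \emph{any} two incomparable new nodes, the union of their two $(k{-}1)$-element clusters contains $k$ distant siblings in the sense of Definition~\ref{k dist sib}, i.e.\ with \emph{all} pairwise meets equal. The embedding you sketch---spreading each new step across $k{-}1$ branching levels of the old tree---does not secure this: if the two new nodes split apart at some new level, the elements of one cluster may all lie strictly above the common meet while the elements of the other cluster have their internal meets at a different height, so the pairwise meets of the union are not uniform. Invoking the combinatorics of \cite{Dzamonja Shelah} does not help here, since those lemmas concern monochromatic subtrees under colorings, not the meet-uniformity you need. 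The argument actually given in \cite{Chernikov Ramsey} avoids this difficulty by a different reduction (collapsing along paths rather than across siblings), and your proposal as written does not close the gap.
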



\begin{Rem}
$\varphi(x;y)$ has weak-TP$_1$ if and only if there is a strongly indiscernible tree $(a_\eta|\eta\in\null^{<\omega} \omega)$ such that \par
    \begin{enumerate} [(a)]
        \item $\{\varphi(x,a_{0^i})|i<\omega\}$ is consistent.
        \item $\{\varphi(x,a_{\nu^\frown i})|i<\omega\}$ is $k$-inconsistent for some $\nu\in\null^{<\omega}\omega$ and $k\geq2$.
    \end{enumerate}
\end{Rem}

To prove the main theorem, we first establish the characterization of given model theoretic property. For example, in \cite{Chernikov Hempel}, Chernikov and Hempel stated the following proposition cited from \cite{Chernikov ntp2};
\begin{Fact} \label{ntp2 eq}
Let $T$ be a theory and $\mathcal{M}\models T$ a monster model. Let $\kappa:=|T|^+$. The following are equivalent:
 \begin{enumerate}
     \item T is NTP$_2$.
     \item for any array $(a_{i,j} : i\in\kappa, j\in\omega)$ of finite tuples with mutually indiscernible rows and a finite tuple $b$, there is some $\alpha\in\kappa$ satisfying the following: \par 
     for any $i>\alpha$, there is some $b'$ such that 
     \begin{enumerate}
         \item $(a_{i,j}\,|\,j<\omega)$ is indiscernible over $b'$, and
         \item tp$(b/a_{i,0})$=tp$(b'/a_{i,0})$.
     \end{enumerate}
 \end{enumerate}
\end{Fact}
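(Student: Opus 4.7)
The plan is to prove both directions via contrapositive, since the forward direction is essentially a direct construction while the reverse requires a compactness-and-Ramsey extraction.

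For $(2)\Rightarrow(1)$, assume $T$ has TP$_2$ witnessed by a formula $\varphi(x,y)$ together with an array $(a_{i,j})_{i<\kappa,j<\omega}$ with mutually indiscernible rows, each row $k$-inconsistent, and each vertical path consistent. By compactness choose $b\models\{\varphi(x,a_{i,0}):i<\kappa\}$. For any $i$ and any $b'$ with $\mathrm{tp}(b'/a_{i,0})=\mathrm{tp}(b/a_{i,0})$ we have $\models\varphi(b',a_{i,0})$; if in addition $(a_{i,j})_j$ were indiscernible over $b'$ then $\models\varphi(b',a_{i,j})$ for every $j$, contradicting the $k$-inconsistency of the row. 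Thus no $\alpha$ as in (2) can exist, which is the required failure.

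For $(1)\Rightarrow(2)$ I argue by contrapositive. Suppose the conclusion of (2) fails, so there is an array $(a_{i,j})$ with mutually indiscernible rows, a finite tuple $b$, and an unbounded $S\subseteq\kappa$ such that for every $i\in S$ no $b'$ with $\mathrm{tp}(b'/a_{i,0})=\mathrm{tp}(b/a_{i,0})$ makes $(a_{i,j})_j$ indiscernible over $b'$. For each such $i$ the partial type saying ``$x\equiv_{a_{i,0}} b$ and $(a_{i,j})_j$ is indiscernible over $x$'' is inconsistent; compactness then yields a formula $\theta_i(x,a_{i,0})\in\mathrm{tp}(b/a_{i,0})$, a formula $\psi_i(x,\bar y)$, and increasing finite tuples of column indices $\bar j^1_i,\bar j^2_i$ such that every realization of $\theta_i(x,a_{i,0})$ separates $\psi_i(x,a_{i,\bar j^1_i})$ from $\psi_i(x,a_{i,\bar j^2_i})$. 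Because $|S|=\kappa=|T|^+$ and there are only $|T|$ many choices of $(\theta_i,\psi_i)$, and because all rows have the same type (so the index data is interchangeable via mutual indiscernibility), a pigeonhole-plus-Ramsey step on $S$ produces an unbounded $S'\subseteq S$ on which $(\theta_i,\psi_i)$ and the column patterns are constant, call them $(\theta,\psi)$.

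Finally I extract a TP$_2$ formula from $S'$. Set $\chi(x;y,\bar u,\bar v):=\theta(x,y)\wedge\bigl(\psi(x,\bar u)\leftrightarrow\neg\psi(x,\bar v)\bigr)$; the tuple $b$ witnesses consistency of $\{\chi(x;a_{i,0},a_{i,\bar j^1},a_{i,\bar j^2}):i\in S'\}$ (one realization across all rows), while within each row, by replaying within the row's indiscernibility the alternation forced by $\theta$, a finite sub-Ramsey argument extracts a $k$-inconsistent horizontal sequence of $\chi$-instances. Mutual indiscernibility of the rows then lets me assemble these row-patterns into an honest TP$_2$-array for $\chi$, contradicting NTP$_2$. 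The main obstacle is exactly this last step: turning the qualitative non-indiscernibility (``$\psi$ has to flip somewhere on every row'') into a uniform, $k$-inconsistent, finitely-bounded horizontal pattern that survives across all rows; this is where the assumption of mutually indiscernible rows, together with a second application of Ramsey inside each row, is essential.
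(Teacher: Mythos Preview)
The paper does not actually prove this Fact; it is cited from \cite{Chernikov ntp2}. However, the paper proves the analogous statement for NTP$_1$ (Proposition~\ref{ntp1 main cor}), and that proof exhibits the intended template, so I compare against that.

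Your direction $(2)\Rightarrow(1)$ is fine.

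Your $(1)\Rightarrow(2)$ has a genuine gap, located exactly where you flag the ``main obstacle''. Two problems:

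\emph{First}, the compactness extraction is misstated. From the inconsistency of ``$x\equiv_{a_{i,0}} b$ together with `row $i$ is indiscernible over $x$'\,'' you only get a finite $\theta_i\in\mathrm{tp}(b/a_{i,0})$ and finitely many pairs $(\psi_l,\bar j_l,\bar j_l')$ such that
\[
\theta_i(x,a_{i,0})\ \vdash\ \bigvee_{l}\bigl(\psi_l(x,a_{i,\bar j_l})\not\leftrightarrow \psi_l(x,a_{i,\bar j_l'})\bigr).
\]
You do not get a single $\psi_i$ and a single pair of index tuples; the witnessing disjunct may depend on $x$.

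\emph{Second}, even if a single $\psi$ were granted, your $\chi(x;y,\bar u,\bar v)=\theta(x,y)\wedge(\psi(x,\bar u)\leftrightarrow\neg\psi(x,\bar v))$ does not produce $k$-inconsistent rows. For disjoint index tuples $(\bar u_m,\bar v_m)_m$ inside row $i$, the conjunction $\bigwedge_m\chi(x;a_{i,0},a_{i,\bar u_m},a_{i,\bar v_m})$ just says $\theta(x,a_{i,0})$ and that $\psi$ flips on each pair; there is nothing inconsistent about that. No amount of Ramsey inside the row turns a ``must flip somewhere'' condition into a bounded inconsistency.

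The route that actually works (and is exactly what the paper does for the NTP$_1$ analogue in Proposition~\ref{ntp1 main cor}) is to insert Ramsey \emph{before} compactness rather than after. For a bad row $i$, write $p_i(x,y)$ for the type with $p_i(x,a_{i,0})=\mathrm{tp}(b/a_{i,0})$ and show that $q_i(x):=\bigcup_{j<\omega}p_i(x,a_{i,j})$ is inconsistent: if $b''\models q_i$, then by Ramsey and compactness one finds $b'$ with $(a_{i,j})_j$ indiscernible over $b'$ and $\mathrm{tp}(b'/a_{i,0})=p_i(x,a_{i,0})$, contradicting badness of row $i$. Now compactness applied to the inconsistency of $q_i$ gives a \emph{single} $\varphi_i(x,y)\in p_i$ and $k_i$ with $\{\varphi_i(x,a_{i,j}):j<\omega\}$ $k_i$-inconsistent. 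Pigeonhole on $|T|^+$ many bad rows yields constant $\varphi,k$; $b$ realizes $\{\varphi(x,a_{i,0}):i\}$, and mutual indiscernibility transports this to every vertical path. That is a TP$_2$ witness for $\varphi$, and the argument is complete without any ``flip'' formula.
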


The basic idea of \ref{ntp2 eq} is to find an indiscernible sequence over some $b'$ which has the same type with given $b$ over some element in that sequence. The trick in the proof can be seen on \cite[Lemma 1.4]{Shelah simple}, too. We apply the same trick on NTP$_1$ with the notions of strong indiscernibility.
 
\begin{Prop} \label{ntp1 main cor}
Let $\kappa>2^{|T|}$ be some sufficiently large regular cardinal. Then TFAE.
 \begin{enumerate}
     \item $T$ is NTP$_1$
     \item For any strongly indiscernible tree $(a_\eta \,|\, \eta \in \null^{<\kappa} \kappa )$ of finite tuples and a finite tuple $b$, there is some $\beta<\kappa$ and $b'$ such that
      \begin{enumerate}[(a)]
         \item $(a_{{0^\beta}^\frown i} \,|\, i<\omega)$ is indiscernible over $b'$, and
         \item tp$(b/a_{{0^\beta}^\frown 0})$ = tp$(b'/a_{{0^\beta}^\frown 0})$,
      \end{enumerate}
     \item For any strongly indiscernible tree $(a_\eta \,|\, \eta \in \null^{<\kappa} \kappa )$ of finite tuples and a finite tuple $b$, there is some $\gamma<\kappa$ satisfying the following:\par
     for any $\beta>\gamma$, there is some $b'$ such that
     \begin{enumerate}[(a)]
         \item $(a_{{0^\beta}^\frown i} \; | \;i<\omega)$ is indiscernible over $b'$, and
         \item tp$(b/a_{{0^\beta}^\frown 0})$ = tp$(b'/a_{{0^\beta}^\frown 0})$.
      \end{enumerate}
 \end{enumerate}
\end{Prop}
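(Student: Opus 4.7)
I would run the cycle $(1)\Rightarrow(3)\Rightarrow(2)\Rightarrow(1)$. The implication $(3)\Rightarrow(2)$ is immediate, and $(2)\Rightarrow(1)$ is a short contrapositive argument; $(1)\Rightarrow(3)$ carries the main work and parallels the Shelah-style compactness-and-pigeonhole extraction that gives Fact \ref{ntp2 eq}, except that the reassembled configuration is a \emph{tree} instead of an array, so strong indiscernibility replaces mutual indiscernibility throughout.

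For $(2)\Rightarrow(1)$, contrapose. If $T$ has TP$_1$, Fact \ref{weak ktp1 eq to tp1} together with the modeling property (Fact \ref{str indisc have MP}) yields a formula $\varphi(x;y)$ and a strongly indiscernible tree $(a_\eta : \eta\in\null^{<\kappa}\kappa)$ with $\{\varphi(x,a_{0^\alpha}) : \alpha<\kappa\}$ consistent and $\{\varphi(x,a_{\nu^\frown i}) : i<\omega\}$ $k$-inconsistent for some $\nu$; strong indiscernibility together with Remark \ref{dist sib eq to 0 to k-1} then lifts this $k$-inconsistency to every node $\nu$. Pick $b$ realizing the path and assume $(2)$ holds with some $\beta,b'$. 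Since ${0^\beta}^\frown 0 = 0^{\beta+1}$ lies on the path, clause (b) gives $\models\varphi(b',a_{{0^\beta}^\frown 0})$, and clause (a) forces $\models\varphi(b',a_{{0^\beta}^\frown i})$ for every $i<\omega$, contradicting $k$-inconsistency.

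For $(1)\Rightarrow(3)$, again contraposition. Suppose $(3)$ fails, so the set $S:=\{\beta<\kappa : \text{no $b'$ works at level }\beta\}$ is cofinal in $\kappa$. For each $\beta\in S$, compactness applied to the inconsistent type $\text{tp}(b/a_{{0^\beta}^\frown 0}) \cup \{\text{``$(a_{{0^\beta}^\frown i})_{i<\omega}$ is indiscernible over $x$''}\}$ produces a formula $\psi_\beta(x,a_{{0^\beta}^\frown 0}) \in \text{tp}(b/a_{{0^\beta}^\frown 0})$, a formula $\chi_\beta(x,\bar y)$, and two finite index tuples $\bar i_\beta, \bar j_\beta$ of the same order type, such that every $b'$ satisfying $\psi_\beta$ witnesses $\chi_\beta(b',\bar a_\beta^{\bar i})\not\leftrightarrow \chi_\beta(b',\bar a_\beta^{\bar j})$. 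Since $\kappa>2^{|T|}$, pigeonhole refines $S$ to a cofinal $S'$ on which this data stabilizes to some $(\psi,\chi,\bar i,\bar j)$; a further pigeonhole fixes the sign of the discrimination for $b$ itself (which is one admissible candidate for $b'$), giving a cofinal $S''\subseteq S'$ on which $\models\chi(b,\bar a_\beta^{\bar i})\wedge\neg\chi(b,\bar a_\beta^{\bar j})$.

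The final and hardest step is the tree-reassembly. Put $\Phi(x;z,\bar u,\bar v):=\psi(x,z)\wedge\chi(x,\bar u)\wedge\neg\chi(x,\bar v)$ and, for each $\eta\in\null^{<\kappa}\kappa$, set $c_\eta:=(a_{\eta^\frown 0},\,\bar a_\eta^{\bar i},\,\bar a_\eta^{\bar j})$ with $\bar a_\eta^{\bar i}:=(a_{\eta^\frown i_l})_l$ when $\bar i=(i_l)_l$. The previous paragraph gives path-consistency: $b\models\Phi(x,c_{0^\beta})$ for every $\beta\in S''$, which is cofinal, so the main branch of the $c_\eta$-tree supports a consistent partial type. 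For sibling-inconsistency, strong indiscernibility (Remark \ref{str indisc prop}(3) and Remark \ref{dist sib eq to 0 to k-1}) transports the pointwise obstruction ``no $b^*$ simultaneously satisfies $\psi(x,a_{\eta^\frown 0})$ and makes $\chi$ agree on $\bar i$ versus $\bar j$'' into a uniform $k$-inconsistency of $\Phi$ along any distant-sibling row of the $c_\eta$-tree, after absorbing a second sibling index into $\Phi$ if necessary. The resulting configuration witnesses weak-TP$_1$, hence TP$_1$ by Fact \ref{weak ktp1 eq to tp1}, contradicting $(1)$. I expect this reassembly to be the main obstacle: choosing $\Phi$ and the node-indexed tuples $c_\eta$ so that per-level discrimination propagates into genuine \emph{joint} unsatisfiability across incomparable positions is precisely where the strong-indiscernibility hypothesis, rather than mere indiscernibility of each sibling row, is essential.
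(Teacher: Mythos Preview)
Your implications $(3)\Rightarrow(2)$ and $(2)\Rightarrow(1)$ are fine and essentially match the paper's $(3)\Rightarrow(1)$. The problem is $(1)\Rightarrow(3)$: the tree-reassembly step does not go through as you describe, and I do not see how to repair it within your framework.

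Concretely, the obstruction you extract by compactness is a \emph{per-node} statement: for each $\eta$, every $b^*$ with $\psi(b^*,a_{\eta^\frown 0})$ sees a $\chi$-difference between $\bar a_\eta^{\bar i}$ and $\bar a_\eta^{\bar j}$. Your formula $\Phi(x;c_\eta)=\psi\wedge\chi(\bar i)\wedge\neg\chi(\bar j)$ already hard-codes one sign of that XOR, so any $b^*$ realizing $\Phi(x;c_\eta)$ automatically satisfies the obstruction at $\eta$. There is nothing preventing the same $b^*$ from realizing $\Phi(x;c_{\eta'})$ at an incomparable $\eta'$: the parameters $c_\eta$ and $c_{\eta'}$ live over disjoint cones of the tree, and strong indiscernibility only tells you the two instances have the same \emph{type}, not that they conflict. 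Your ``absorbing a second sibling index'' gesture does not help, because the obstruction is about children of a single node, not about a relation between two sibling nodes. So $\{\Phi(x;c_{\eta_i}):i<k\}$ for distant siblings $\eta_0,\dots,\eta_{k-1}$ has no reason to be inconsistent, and the configuration does not witness weak-TP$_1$.

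The paper avoids this by running the cycle $(1)\Rightarrow(2)\Rightarrow(3)\Rightarrow(1)$ and putting the real work in $(1)\Rightarrow(2)$, where the weak-TP$_1$ formula is extracted from a \emph{different} inconsistency. First pigeonhole along the $0$-path to get $(\alpha_i)_{i<\omega}$ with $\operatorname{tp}(a_{0^{\alpha_i}}/b)$ constant, and build a strongly indiscernible subtree $(a'_\eta)_{\eta\in{}^{<\omega}\omega}$ whose $0$-branch sits on these $a_{0^{\alpha_i}}$'s. Set $p(x,y)=\operatorname{tp}(b/a'_0)$ and $q(x)=\bigcup_{i<\omega}p(x,a'_i)$, a type over the \emph{immediate siblings} $a'_0,a'_1,\dots$. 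If $q$ is consistent, any realization (after Ramsey) gives the desired $b'$. If $q$ is inconsistent, compactness and sibling-indiscernibility give a single $\varphi\in p$ with $\{\varphi(x,a'_i):i<\omega\}$ $k$-inconsistent; path-consistency is free because $b\models\varphi(x,a'_{0^i})$ for all $i$ by the pigeonhole step. This is weak $k$-TP$_1$, contradicting $(1)$. The step $(2)\Rightarrow(3)$ is then a separate, short argument: if the set of bad $\beta$'s is cofinal, re-index along it to produce a strongly indiscernible subtree on which \emph{every} level is bad, contradicting $(2)$.

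The moral: the right place to look for the TP$_1$ formula is the inconsistency of the sibling-type $q$, not the failure of indiscernibility over $b'$.
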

\begin{proof}
(1) $\Rightarrow$ (2). Assume $T$ is NTP$_1$, and let $A = (a_\eta \,|\, \eta \in \null^{<\kappa} \kappa )$ and $b$ be given.\par
By pigeonhole principle, there is a subsequence $(\alpha_i \,|\, i<\omega)$ in the set of successor ordinals smaller than $\kappa$ such that for all $i<j<\omega$, ${\alpha_i}^+<\alpha_j<\kappa$ and tp$(a_{0^{\alpha_i}}/b)$ = tp$(a_{0^{\alpha_j}}/b)$. We inductively define a subtree $(a'_\eta \,|\, \eta \in \null^{<\omega} \omega )$ in $A$ as follows;
 \begin{itemize}
     \item $a'_{\langle\rangle}=a_{0^{\alpha_0}}$,
     \item for any $\eta\in  \null^{<\omega} \omega$, if $a'_\eta=a_\rho$ for some $\rho\in \null^{<\kappa} \kappa$ with Dom$(\rho)=\alpha_i$, then for each $j<\omega$, $a'_{\eta^\frown j}=a_{{\rho'}^\frown j}$ where $\rho'=\rho^\frown0^{\delta}$ for some $\delta$ so that Dom$({\rho'}^\frown j)=\alpha_{i+1}$.
 \end{itemize}
Note that $( a'_{0^i} \,|\, i<\omega )$ is a subsequence of $(a_{0^\alpha} \,|\, \alpha<\kappa)$, and for any $\eta, \nu\in \null^{<\omega} \omega$, if $a'_{\eta}=a_{\eta'}$, $a'_{\nu}=a_{\nu'}$, $a'_{\eta\wedge\nu}=a_{\xi}$ for $\eta', \nu'$, and $\xi \in\null^{<\kappa} \kappa$, then $\xi\neq\eta'\wedge\nu'$. Moreover, the subtree is strongly indiscernible, too.
 \par
Let $p(x,a'_0)$ := tp$(b/a'_0)$ and $q(x)$ := $\bigcup_{i<\omega}p(x,a'_i)$. We claim that $q$ is consistent.
\par
 Suppose not. Then by compactness and strong indiscernibility, there is a formula $\varphi(x,y)$ such that $\varphi(x,a'_0) \in p(x,a'_0)$ and $\{ \varphi(x,a'_i) \,|\, i<\omega \}$ is $k$-inconsistent for some $k<\omega$. 
 On the other hand, $b$ realizes $\bigcup_{i<\omega}p(x,a'_{0^i})$, so it realizes $\{ \varphi(x,a'_{0^i}) \,|\, i<\omega \}$.
 As a result, $\varphi$ has weak $k$-TP$_1$, and fact \ref{weak ktp1 eq to tp1} further says that $T$ has TP$_1$. \par
By claim, we can find a realization $b' \models q(x)$. \par
We may assume $( a'_{i} \,|\, i<\omega )$ is indiscernible over $b'$ by Ramsey and compactness. Note that tp$(b/a'_0)$ = tp$(b'/a'_0)$ since $b' \models p(x,a'_0)$. Thus, if $a'_i=a_{{0^\beta}^\frown i}$ for some $\beta$, then this $\beta$ and $b'$ is the desired one.
\par
(2) $\Rightarrow$ (3). Assume (2) holds. For a strongly indiscernible tree $(a_\eta \,|\, \eta \in \null^{<\kappa} \kappa )$ and a finite tuple $b$, we will say $Q(\beta)$ holds on $(a_\eta \,|\, \eta \in \null^{<\kappa} \kappa )$ and $b$ when there is a $b'$ such that $(a_{{0^\beta}^\frown i} \; | \;i<\omega)$ is indiscernible over $b'$, and tp$(b/a_{{0^\beta}^\frown 0})$ = tp$(b'/a_{{0^\beta}^\frown 0})$.
Suppose there is a strongly indiscernible tree $(a_\eta \,|\, \eta \in \null^{<\kappa} \kappa )$ and a finite tuple $b$ such that for any $\gamma<\kappa$, there is a $\beta>\gamma$ which does not satisfy $Q$. Then, since cf$(\kappa)=\kappa$, we can find a cofinal map $f:\kappa\rightarrow\kappa$ such that for any $i,j<\kappa$, $1<f(i)^+<f(j)$, and $f(i)$ does not satisfy $Q$.\par
Now, construct the following map $g:\null^{<\kappa} \kappa \rightarrow \null^{<\kappa} \kappa$ . First, if $\eta=\langle\rangle$, then $g(\eta)=\langle\rangle$ For other non-empty string $\eta\in\null^{<\kappa} \kappa$, Dom$(g(\eta))=$ Sup$\{f(l)^+\,|\,l\in\text{Dom}(\eta)\}$, and for each $i<\text{Dom}(g(\eta))$,
\begin{center}
$(g(\eta))(i)=
\begin{cases}
\eta(j) & \text{if } i=f(j)^+ \text{ for some } j<\kappa, \\
0 & \text{otherwise}.
\end{cases}$
\end{center}
Then we define $a'_\eta=a_{g(\eta)}$. The subtree $(a'_\eta\,|\,\eta\in\null^{<\kappa} \kappa)$ is strongly indiscernible, and for each $\beta<\kappa$, $Q(\beta)$ does not hold on $(a'_\eta\,|\,\eta\in\null^{<\kappa} \kappa)$ and $b$. This contradicts to (2).
\par
(3) $\Rightarrow$ (1). Assume (3) holds. Suppose $T$ has TP$_1$ witnessed by $\varphi(x;y)$ and $(a_\eta \,|\, \eta \in \null^{<\kappa} \kappa )$. 
We may assume the tree $(a_\eta \,|\, \eta \in \null^{<\kappa} \kappa )$ is strongly indiscernible by the modeling property. 
Note (a) and (b) in \ref{tree properties}(3) still hold. Take $b\models\{ \varphi(x,a_{0^\alpha}) \,|\, \alpha<\kappa \}$. By the assumption, we have some ordinals $\gamma<\beta<\kappa$ and $b'$ such that
 \begin{enumerate}[(a)]
    \item $(a_{{0^\beta}^\frown i} \; | \;i<\omega)$ is indiscernible over $b'$, and
    \item tp$(b/a_{{0^\beta}^\frown 0})$ = tp$(b'/a_{{0^\beta}^\frown 0})$.
 \end{enumerate}
\par
Choose an automorphism $\sigma$ where it fixes $a_{{0^\beta}^\frown 0}$ and sends $b'$ to $b$. Denote $(a'_i \,|\, i<\omega)$ to $(\sigma(a_{{0^\beta}^\frown i})\,|\, i<\omega)$, then it is indiscernible over $b$. Since $a'_0$ is as same as $a_{{0^\beta}^\frown 0}$, 
$b\models\varphi(x,a'_0)$, and then the indiscernibility implies that for all $i<\omega$, $b\models\varphi(x,a'_i)$. But this is a contradiction because $\{\varphi(x,a'_i) | i<\omega\}$ is 2-inconsistent, as well as $\{\varphi(x,a_{{0^\beta}^\frown i}) \,|\, i<\omega\}$ is.

\end{proof}

Analogously, We find a lemma for SOP$_1$. Unlike the case of TP$_1$, we cannot use the strong indiscernibility. 
For instance, let $A$ be a tree which has the inconsistency condition of SOP$_1$ and let $B$ be a strong indiscernible tree based on $A$. 
There is no guarantee that $B$ has the inconsistency condition. 
Hence, we need another indiscernibility which matches up to SOP$_1$.\par
To find this, we recall results in \cite{Chernikov Ramsey} and \cite{Kaplan Ramsey}.

\begin{Fact} \cite{Chernikov Ramsey} \label{comb in tree}
Suppose $\varphi(x;y)$ with the tree $(c_\eta\,|\,\eta\in\null^{<\kappa}2)$ have SOP$_1$ where $\kappa\geq 2^{|T|}$. Then there is a sequence $(\eta_i,\nu_i)_{i<\omega}$ of elements of $\null^{<\kappa}2$ such that
 \begin{enumerate}
     \item $c_{\eta_i}\equiv_{c_{\eta_{<i}}c_{\nu_{<i}}} c_{\nu_i}$ for all $i<\omega$,
     \item $\{\varphi(x;c_{\eta_i})\,|\, i<\omega \}$ is consistent, and
     \item $\{\varphi(x,c_{\nu_i})\,|\, i<\omega \}$ is 2-inconsistent.
 \end{enumerate}
\end{Fact}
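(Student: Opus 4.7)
The plan is to pass to a strongly indiscernible replacement of the given SOP$_1$ witness and then read off the desired sequence from an explicit ``zig-zag'' choice of indices along the leftmost branch.

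First, apply the modeling property (Fact~\ref{str indisc have MP}) to $(c_\eta)_{\eta\in\,^{<\kappa}2}$ to obtain a strongly indiscernible tree $(c'_\eta)_{\eta\in\,^{<\omega}2}$ strongly based on it. Both clauses of the SOP$_1$ definition are first-order conditions on finite sub-tuples: for each $n$, $\exists x\bigwedge_{k\leq n}\varphi(x;y_k)$ holds on $(c_{0^1},\dots,c_{0^n})$, and for each pair with $\xi\frown 0\unlhd\nu$ the sentence $\neg\exists x(\varphi(x;y_0)\wedge\varphi(x;y_1))$ holds on $(c_{\xi\frown 1},c_\nu)$. These sentences therefore lie in the EM-type of the original tree, and by Remark~\ref{EM eq basedness} they transfer to $(c'_\eta)$; so $(c'_\eta)$ still witnesses SOP$_1$ for $\varphi$.

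Second, set $\eta_i:=0^{i+1}$ and $\nu_i:=0^i\frown 1$. Clause (2) is immediate since $(c'_{0^{i+1}})_{i<\omega}$ lies along the single branch $0^\omega$. For clause (3), given $i<j$ apply the SOP$_1$ inconsistency to $\xi:=0^i$ and $\nu:=\nu_j$: then $\xi\frown 0=0^{i+1}\unlhd 0^j\lhd 0^j\frown 1=\nu_j$ and $\xi\frown 1=\nu_i$, so $\{\varphi(x;c'_{\nu_i}),\varphi(x;c'_{\nu_j})\}$ is inconsistent.

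Third, clause (1) reduces, by strong indiscernibility, to the qf-type identity
\[
\text{qftp}_{L_0}(\eta_0,\nu_0,\dots,\eta_{i-1},\nu_{i-1},\eta_i)=\text{qftp}_{L_0}(\eta_0,\nu_0,\dots,\eta_{i-1},\nu_{i-1},\nu_i).
\]
The two tuples agree in all but the last coordinate: $\eta_i=0^{i+1}$ versus $\nu_i=0^i\frown 1$. For each $j<i$, both candidates have $\eta_j=0^{j+1}$ as an initial segment, and both are $\wedge$-incomparable with $\nu_j=0^j\frown 1$ at common meet $0^j$ and $<_{\mathrm{lex}}$-smaller than it. Hence every $\{\lhd,\wedge,<_{\mathrm{lex}}\}$-relation involving the last coordinate is the same for the two candidates, the qf-types coincide, and (1) follows. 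The main point requiring care is the first step, namely ensuring that both the positive and the negative parts of SOP$_1$ sit in the EM-type of the original tree so that they survive the passage to a strongly indiscernible replacement; after that, steps two and three are a routine qf-type calculation in $L_0$.
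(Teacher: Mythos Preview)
There is a genuine gap in your first step. You claim that the SOP$_1$ inconsistency clause lies in the $L_0$-EM-type of the original tree, but it does not. The $L_0$-qf-type of a pair $(\xi^\frown 1,\nu)$ with $\xi^\frown 0\unlhd\nu$ records only that the two nodes are $\lhd$-incomparable, that their meet is strictly below both, and that $\nu<_{lex}\xi^\frown 1$. Many pairs share this qf-type without being in the SOP$_1$ configuration: for instance $(\langle 1,0\rangle,\langle 0,1\rangle)$ has the same $L_0$-qf-type as $(\langle 1\rangle,\langle 0,1\rangle)$, yet neither element of the first pair is of the form $\xi^\frown 1$ with $\xi^\frown 0$ below the other. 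Hence $\neg\exists x(\varphi(x;y_0)\wedge\varphi(x;y_1))$ need not hold at all such pairs in the original tree, so it is not in the EM-type, and it need not transfer to your strongly indiscernible $(c'_\eta)$. Your argument for clause~(3) then collapses, since it invokes the SOP$_1$ inconsistency for $c'$ directly.

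The paper in fact flags exactly this obstruction in the paragraph preceding the Fact: a strongly indiscernible tree based on an SOP$_1$ witness is not guaranteed to retain the inconsistency condition, which is why a different device is needed for SOP$_1$. Note also that your argument never uses the hypothesis $\kappa\geq 2^{|T|}$; that size assumption is what actually drives the proof in \cite{Chernikov Ramsey}, where one extracts the sequence $(\eta_i,\nu_i)$ by a pigeonhole/type-counting argument along the leftmost branch of the original tree (choosing $\nu_i$ so that $c_{\nu_i}$ realizes the same type as $c_{\eta_i}$ over the earlier parameters), rather than by passing to an $L_0$-indiscernible replacement. Your steps~(2) and~(3) are fine once clause~(1) is secured, but the method you propose for securing it does not work.
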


Kaplan and Ramsey \cite{Kaplan Ramsey} proved more general result about SOP$_1$.
\begin{Fact} \label{sop1 and comb str}
The following are equivalent;
 \begin{enumerate}
     \item $T$ has SOP$_1$.
     \item There is a formula $\varphi$ and an array $(c_{i,j})_{i<\omega, j<2}$ so that
      \begin{enumerate}
          \item $c_{i.0}\equiv_{c_{<i,0}c_{<i,1}} c_{i,1}$ for all $i<\omega$
          \item $\{\varphi(x;c_{i,0})\,|\, i<\omega \}$ is consistent
          \item $\{\varphi(x,c_{i,1})\,|\, i<\omega \}$ is 2-inconsistent.
      \end{enumerate}
     \item There is a formula $\varphi$ and an array $(c_{i,j})_{i<\omega, j<2}$ so that
      \begin{enumerate}
          \item $c_{i.0}\equiv_{c_{<i,0}c_{<i,1}} c_{i,1}$ for all $i<\omega$
          \item $\{\varphi(x;c_{i,0})\,|\, i<\omega \}$ is consistent
          \item $\{\varphi(x,c_{i,1})\,|\, i<\omega \}$ is $k$-inconsistent for some $k\geq2$.
      \end{enumerate}
 \end{enumerate}
\end{Fact}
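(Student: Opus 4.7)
The implication (2) $\Rightarrow$ (3) is immediate by taking $k = 2$. For (1) $\Rightarrow$ (2), I invoke Fact~\ref{comb in tree} directly: starting from a formula $\varphi$ and a tree $(c_\eta)_{\eta \in \null^{<\kappa}2}$ witnessing SOP$_1$ for $\kappa \geq 2^{|T|}$, that fact produces $(\eta_i, \nu_i)_{i<\omega}$ whose three listed properties are precisely (a), (b), and (c) with $k=2$ after relabeling $c_{i,0} := c_{\eta_i}$ and $c_{i,1} := c_{\nu_i}$.

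The substantive direction is (3) $\Rightarrow$ (1), which I plan to carry out in three steps. Step~1: stretch the array to length $\lambda$ for $\lambda$ sufficiently large by compactness and pass, via Ramsey plus the modeling property for sequence-indexed indiscernibles, to a subarray along which the sequence of pairs $(c_{i,0}c_{i,1})_{i<\omega}$ is indiscernible; the properties (a), (b), (c) are encoded in the EM-type and so survive the extraction. Step~2: using (a), fix for each $i$ an automorphism $\sigma_i \in \mathrm{Aut}(\mathcal{M}/c_{<i,0}c_{<i,1})$ with $\sigma_i(c_{i,0}) = c_{i,1}$. Step~3: build the SOP$_1$-tree $(a_\eta)_{\eta \in \null^{<\omega}2}$ recursively on $|\eta|$: begin from the ``$0$-column'' $(c_{i,0})_{i<\omega}$ on the all-$0$ branch, and whenever a branch turns right for the first time at level $i$, apply $\sigma_i$ to the entire subtree below, thereby swapping the current column with the $1$-column at level $i$ while fixing everything strictly above. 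Path consistency of $\{\varphi(x;a_{\eta\restriction\alpha})\}_\alpha$ then transports from (b) by pushing the realization $b \models \{\varphi(x;c_{i,0})\}_i$ along the composition of $\sigma_i$'s defining the branch, and the SOP$_1$ incompatibility between $a_{\xi^\frown 1}$ and $a_\nu$ with $\xi^\frown 0 \unlhd \nu$ reduces, after pulling back by an automorphism fixing their common ancestor, to the inconsistency of a pair $\varphi(x;c_{i,1}) \wedge \varphi(x;c_{j,1})$ with $i \neq j$.

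The main obstacle is to convert the $k$-inconsistency in (c) into the pairwise inconsistency demanded by the SOP$_1$ definition. My plan is to handle this by first reducing (3) to (2): block the array into super-columns $d_{i,j} := (c_{(k-1)i + \ell, j})_{\ell < k-1}$ and replace $\varphi$ by $\psi(x; y_0, \dots, y_{k-2}) := \bigwedge_{\ell < k-1} \varphi(x; y_\ell)$. Then $\{\psi(x;d_{i,1})\}_i$ is $2$-inconsistent because $\psi(x;d_{i,1}) \wedge \psi(x;d_{j,1})$ bundles $2(k-1) \geq k$ distinct instances of $\varphi(x;c_{\,\cdot\,,1})$, and $\{\psi(x;d_{i,0})\}_i$ remains consistent by (b). The delicate point is preserving the conjugation property (a) for the blocked array, i.e.\ $d_{i,0} \equiv_{d_{<i,0}d_{<i,1}} d_{i,1}$: this is a statement about joint types of entire blocks and does not follow from the single-column (a) alone. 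It is exactly here that the indiscernibility of the pair sequence obtained in Step~1 must be used, by chaining the $\sigma_m$'s for $m$ ranging over a block to stitch individual conjugations into one joint conjugation between $d_{i,0}$ and $d_{i,1}$; verifying this stitching is the most technically demanding part of the argument.
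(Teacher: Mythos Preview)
The paper does not prove this statement itself; it is quoted as a fact from \cite{Kaplan Ramsey}. Your reductions $(1)\Rightarrow(2)$ (via Fact~\ref{comb in tree}) and $(2)\Rightarrow(3)$ are fine. The real content is $(3)\Rightarrow(1)$, and there your plan has two genuine gaps.

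First, the blocking reduction $(3)\Rightarrow(2)$. You rightly flag the block-wise conjugation $d_{i,0}\equiv_{d_{<i}}d_{i,1}$ as the crux, but the proposed ``stitching'' of the $\sigma_m$'s cannot produce it. Each $\sigma_m$ fixes only $c_{<m}$ and has no control whatsoever over $c_{>m}$, so applying it to a block moves the $m$th coordinate correctly while scrambling all later ones. Already for block size~$2$ the target $(c_{0,0},c_{1,0})\equiv(c_{0,1},c_{1,1})$ would require (after one application of $\sigma_1$) something like $c_{0,0}\equiv_{c_{1,1}}c_{0,1}$, a conjugation over a \emph{later} element; neither hypothesis~(a) (conjugation over the past only) nor indiscernibility of $(c_{i,0}c_{i,1})_i$ (which shifts the index $i$ but never swaps the two columns) supplies this. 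So the stitching is not merely technically demanding---as formulated it does not go through.

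Second, even for $(2)\Rightarrow(1)$, your Step~3 claim that the pair $(a_{\xi^\frown1},a_\nu)$ always pulls back to some $(c_{i,1},c_{j,1})$ is not correct for the construction you describe. With $a_\eta=\tau_\eta(c_{|\eta|-1,0})$ and $\tau_\eta=\sigma_0^{\eta(0)}\circ\cdots\circ\sigma_{|\eta|-1}^{\eta(|\eta|-1)}$, one computes that $(a_{\xi^\frown1},a_\nu)$ is conjugate to $(c_{|\xi|,1},c_{|\nu|-1,\nu(|\nu|-1)})$; when $\nu$ ends in~$0$ (e.g.\ $\nu=\xi^\frown0$) this is $(c_{|\xi|,1},c_{|\nu|-1,0})$, about which condition~(c) says nothing. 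The Kaplan--Ramsey argument handles both issues by a more elaborate route than blocking the array or the naive tree transport you sketch.
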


From these facts, we derive a new kind of indiscernibility.
\begin{Def} \label{comb indiscernible}
We say $(c_{i,j})_{i<\omega, j<2}$ is an \emph{SOP$_1$-array indiscernible over $A$} if
 \begin{enumerate}
     \item $(c_{i,0}c_{i,1})_{i<\omega}$ is an (order) indiscernible sequence over $A$, and
     \item $c_{i.0}\equiv_{Ac_{<i,0}c_{<i,1}} c_{i,1}$ for all $i<\omega$.
 \end{enumerate}
\end{Def}
Note that we may replace the array in \ref{sop1 and comb str} to an indiscernible SOP$_1$-array .

\begin{Rem}
Let an indiscernible SOP$_1$-array $(c_{i,j})_{i<\omega, j<2}$ be given.
 \begin{enumerate}
     \item For any $\kappa>\omega$, there is a indiscernible SOP$_1$-array $(c'_{i,j})_{i<\kappa, j<2}$ such that $c_{i,j}=c'_{i,j}$ for all $i<\omega$ and $j<2$.
     \item  For any $n<\omega$, $(c_{i,j})_{n\leq i<\omega, j<2}$ is an SOP$_1$-array indiscernible over $\{ c_{i,j}\,|\,i<n, j<2 \}$.
 \end{enumerate}
\end{Rem}

\begin{Prop} \label{nsop1 main cor}
Let $\kappa>2^{|T|}$ be some sufficiently large regular cardinal. Then TFAE.
 \begin{enumerate}[(1)]
     \item $T$ is NSOP$_1$ 
     \item For any indiscernible SOP$_1$-array $(a_{i,j})_{i<\kappa, j<2}$ of finite tuples and a finite tuple $b$, there is some $\beta<\kappa$ and some $b'$ such that
      \begin{enumerate}[(a)]
         \item tp$(b,a_{\beta,0})$ = tp$(b',a_{0,1})$,
         \item $(a_{i,1}\; | \;i<\omega)$ is indiscernible over $b'$.
      \end{enumerate}
     \item For any indiscernible SOP$_1$-array $(a_{i,j})_{i<\kappa, j<2}$ of finite tuples and a finite tuple $b$, there is some $\gamma<\kappa$ satisfying the following:\par
     for any number $\beta>\gamma$, there is some $b'$ such that
      \begin{enumerate}[(a)]
         \item tp$(b,a_{\beta,0})$ = tp$(b',a_{0,1})$,
         \item $(a_{i,1}\; | \;i<\omega)$ is indiscernible over $b'$.
      \end{enumerate}
 \end{enumerate}
\end{Prop}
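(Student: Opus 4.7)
The plan is to follow Proposition \ref{ntp1 main cor}, replacing the tree combinatorics with the (simpler) row-column structure of an SOP$_1$-array and substituting Fact \ref{sop1 and comb str} for Fact \ref{weak ktp1 eq to tp1}. The organizing observation is that the two-part condition on $b'$ in $(2)$ -- namely $\mathrm{tp}(b,a_{\beta,0}) = \mathrm{tp}(b',a_{0,1})$ together with $(a_{i,1})_{i<\omega}$ indiscernible over $b'$ -- is equivalent to mere consistency of the partial type $\bigcup_{i<\omega}\mathrm{tp}(b/a_{\beta,0})(x,a_{i,1})$. The forward direction is immediate; for the converse, use indiscernibility of the full $1$-column to extend the union to length $\kappa$, realize it by some $b_0$, extract by Erd\H{o}s--Rado a sub-sequence $(a_{\gamma_i,1})_{i<\omega}$ indiscernible over $b_0$, and apply an automorphism sending this sub-sequence to $(a_{i,1})_{i<\omega}$. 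I will denote this equivalent form of the property by $Q(\beta)$.

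For (1)$\Rightarrow$(2), fix $A=(a_{i,j})$ and $b$. Since $\kappa>2^{|T|}$, pigeonhole produces a cofinal $S\subseteq\kappa$ along which $\mathrm{tp}(b/a_{\beta,0})$ is constant. I claim $Q(\beta^*)$ holds for any $\beta^*\in S$. Otherwise, compactness supplies a formula $\varphi$ with $\varphi(b,a_{\beta^*,0})$ and $\{\varphi(x,a_{i,1})\}_{i<\omega}$ being $k$-inconsistent for some $k$. Enumerating $\omega$ elements of $S$ as $(\beta_i)_{i<\omega}$, the sub-array $(a_{\beta_i,j})_{i<\omega,j<2}$ is again an indiscernible SOP$_1$-array; $b$ witnesses consistency of $\{\varphi(x,a_{\beta_i,0})\}_{i<\omega}$ by constancy of the type along $S$, while indiscernibility of the original $1$-column transfers $k$-inconsistency to $\{\varphi(x,a_{\beta_i,1})\}_{i<\omega}$. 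Fact \ref{sop1 and comb str} now produces SOP$_1$, contradicting (1).

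For (2)$\Rightarrow$(3), argue contrapositively: if $\{\beta<\kappa:\neg Q(\beta)\}$ is cofinal, pick a cofinal enumeration $(\beta_i)_{i<\kappa}$ inside it and set $a'_{i,j}:=a_{\beta_i,j}$. The array $A'$ is still an indiscernible SOP$_1$-array. A formula $\varphi_i$ witnessing $\neg Q(\beta_i)$ on $A$ also witnesses $\neg Q(i)$ on $A'$, since $\varphi_i(b,a'_{i,0})$ is literally $\varphi_i(b,a_{\beta_i,0})$ and $k_i$-inconsistency of $\{\varphi_i(x,a_{j,1})\}_{j<\omega}$ transfers to $\{\varphi_i(x,a'_{j,1})\}_{j<\omega}=\{\varphi_i(x,a_{\beta_j,1})\}_{j<\omega}$ by indiscernibility of the original $1$-column. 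Hence (2) fails for $A'$ and $b$, as required.

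For (3)$\Rightarrow$(1), suppose $\varphi$ witnesses SOP$_1$. By Fact \ref{sop1 and comb str} together with the remark extending lengths past $\omega$, we may take an indiscernible SOP$_1$-array $(a_{i,j})_{i<\kappa,j<2}$ with $\{\varphi(x,a_{i,0})\}_{i<\kappa}$ consistent and $\{\varphi(x,a_{i,1})\}_{i<\kappa}$ being $k$-inconsistent; let $b$ realize the $0$-column. Applying (3) yields some $\beta$ and $b'$ satisfying (a) and (b); type equality forces $\varphi(b',a_{0,1})$, and then indiscernibility of $(a_{i,1})_{i<\omega}$ over $b'$ forces $\varphi(b',a_{i,1})$ for every $i<\omega$, contradicting $k$-inconsistency. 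The main subtlety, in my view, is cleanly establishing the $Q(\beta)$-to-consistency equivalence in the first paragraph; once that is in place, the delicate tree-shift map $g$ used in (2)$\Rightarrow$(3) of Proposition \ref{ntp1 main cor} is unnecessary, because indiscernible SOP$_1$-arrays are automatically closed under passing to sub-enumerations.
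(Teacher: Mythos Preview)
Your proof is correct and follows essentially the same route as the paper's: pigeonhole on the $0$-column, consistency of $\bigcup_i p(x,a_{i,1})$ via Fact~\ref{sop1 and comb str}, extraction of an indiscernible-over-$b'$ copy of the $1$-column, and passage to a cofinal sub-array for $(2)\Rightarrow(3)$. The only difference is organizational: you isolate the equivalence between ``$b'$ exists'' and ``$\bigcup_{i<\omega}p(x,a_{i,1})$ is consistent'' as a preliminary lemma (using Erd\H{o}s--Rado on the $\kappa$-length $1$-column), whereas the paper performs the corresponding Ramsey--compactness step inline during $(1)\Rightarrow(2)$ by realizing the combined type $\Pi(x,\bar y)$ that simultaneously encodes $p$, $\mathrm{tp}((a_{i,1})_{i<\omega})$, and indiscernibility over $x$; and in $(2)\Rightarrow(3)$ you show $\neg Q$ persists everywhere on the sub-array, while the paper applies $(2)$ to the sub-array and transfers the resulting $Q$ back---logically the same contrapositive. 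Your closing remark that the tree-shift map $g$ from Proposition~\ref{ntp1 main cor} is unnecessary here is exactly right and matches what the paper does implicitly.
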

\begin{proof}
(1) $\Rightarrow$ (2). Assume $T$ is NSOP$_1$, and let $(a_{i,j})_{i<\kappa, j<2}$ and $b$ in (2) be given. 
Using the pigeonhole principle, take an indiscernible SOP$_1$-subarray $(a'_{i,0}a'_{i,1})_{i<\omega}$ in $(a_{i,0}a_{i,1})_{i<\kappa}$ where tp$(a'_{i,0}/b)$=tp$(a'_{j,0}/b)$ for all $i, j<\omega$.\par
Now let $p(x,y)=$ tp$(b,a'_{0,0})$, and $q(x)=\bigcup_{i<\omega}p(x,a'_{i,1})$. We claim that $q$ is consistent.
Suppose not. Then by indiscernibility and compactness, there is a formula $\varphi(x,y)$ in $p(x,y)$ such that $\{\varphi(x,a'_{i,1})\,|\,i<\omega\}$ is $k$-inconsistent for some natural number $k$. 
On the other hand, since $\varphi(x,a'_{i,0})$ is in tp$(b/a'_{i,0})$ for each $i<\omega$, $\{\varphi(x,a'_{i,0})\,|\,i<\omega\}$ is consistent.
Hence $\varphi$ has SOP$_1$ by Fact \ref{sop1 and comb str}, which is a contradiction.\par
Let $\bar{y}=(y_i)_{i<\omega}$ where for each $i<\omega$, $|y_i|=|a'_{i,1}|$, and let $\Pi(x,\bar{y})$ be the union of $\bigcup_{i<\omega}p(x,y_i)$, $\Phi(\bar{y})$, and $\Psi(x,\bar{y})$ where $\Phi(\bar{y})=$tp$((a'_{i,1})_{i<\omega})$ and $\Psi(x, \bar{y})$ means that $(y_i)_{i<\omega}$ is indiscernible over $x$. 
By Ramsey and compactness, $\Pi(x,\bar{y})$ is consistent. Let $(b'',(a''_{i,1})_{i<\omega})$ be a realization of $\Pi(x,\bar{y})$.
Since $\Phi(\bar{y})=$tp$((a'_{i,1})_{i<\omega})$=tp$((a_{i,1})_{i<\omega})$, we have $(a_{i,1}\; | \;i<\omega)$ is indiscernible over some $b'$ where tp$(b',a_{0,1})$ = tp$(b'',a''_{0,1})=p(x,y_0)$ = tp$(b,a'_{0,0})$. 
Note $a'_{0,0}=a_{\beta,0}$ for some $\beta<\kappa$.\par

(2) $\Rightarrow$ (3). For an indiscernible SOP$_1$-array $(a_{i,j})_{i<\kappa, j<2}$ and a finite tuple $b$, we will say $Q(\beta)$ holds on $(a_{i,j})_{i<\kappa, j<2}$ and $b$ when there is a $b'$ such that tp$(b/a_{\beta,0})$ = tp$(b'/a_{0,1})$, and $(a_{i,1}\; | \;i<\omega)$ is indiscernible over $b'$.
Assume (2) holds but (3) does not hold, that is, there is an indiscernible SOP$_1$-array $(a_{i,j})_{i<\kappa, j<2}$ and a tuple $b$ such that for any $\gamma<\kappa$, there is some $\beta>\gamma$ which does not satisfy $Q$.
From this and cf$(\kappa)=\kappa$, we choose an increasing sequence $(\beta_i)_{i<\kappa}$ of ordinal numbers such that for each $\beta_i$, $Q$ does not hold.\par
Now take a subarray $(a_{\beta_i,0}a_{\beta_i,1})_{i<\kappa}$ in $(a_{i,0}a_{i,1})_{i<\kappa}$. 
This array is still indiscernible, so by (2), there is some $j<\kappa$ and some $b'$ such that tp$(b,a_{\beta_j,0})$ = tp$(b',a_{\beta_0,1})$, and $(a_{\beta_i,1}\; | \;i<\omega)$ is indiscernible over $b'$.
Since tp$((a_{\beta_i,1})_{i<\omega})$=tp$((a_{i,1})_{i<\omega})$ by indiscernibility, we may assume that there is some $j<\kappa$ and some $b'$ such that tp$(b,a_{\beta_j,0})$ = tp$(b',a_{0,1})$, and $(a_{i,1}\; | \;i<\omega)$ is indiscernible over $b'$. This contradicts that $Q(\beta_j)$ does not hold.
\par
(3) $\Rightarrow$ (1). Assume (3) holds. Suppose $T$ has SOP$_1$. 
By fact\ref{sop1 and comb str}(2) and compactness, we have a formula $\varphi(x;y)$ which witnesses SOP$_1$ with an indiscernible SOP$_1$-array $(a_{i,j})_{i<\kappa, j<2}$.
Let $b$ be a realization of $\bigwedge_{i<\kappa}\varphi(x,a_{i,0})$. By assumption, there is some ordinals $\gamma<\beta<\kappa$ and some $b'$ such that
      \begin{enumerate}[(a)]
         \item tp$(b,a_{\beta,0})$ = tp$(b',a_{0,1})$,
         \item $(a_{i,1}\; | \;i<\omega)$ is indiscernible over $b'$.
      \end{enumerate}
From (a), we have $\models\varphi(b',a_{0,1})$, and then from (b), we have $\models\varphi(b',a_{1,1})$. This contradicts that $\{\varphi(x,a_{i,1})\,|\, i<\kappa \}$ is 2-inconsistent.
\end{proof}

\section{Mekler's construction}
We recall the definitions and facts from \cite{Hodges}.

For a graph $A$ and its vertices $a$ and $b$, we say $R(a,b)$ if $a$ and $b$ are connected by a single edge in $A$.
\begin{Def} \label{def of nice graph}
 A graph $A$ which has at least two vertices is called \emph{nice} if
 \begin{enumerate} [(a)]
    \item For any two distinct vertices $a$ and $b$, there is some vertex $c\neq a,b$ such that $R(a,c)$ but $\neg R(b,c)$;
    \item There are no triangles nor squares.
 \end{enumerate}
\end{Def}

\begin{Def} \cite{Baudisch, Mekler}  \label{def of Mekler gp}
 Fix an odd prime $p$. For a nice graph $A$, let $F(A)$ be the free nilpotent group of class 2 and exponent $p$ generated freely by the vertices of $A$. Assume that $A$ is enumerated with some relation $<$ not in the original language. Then the \emph{Mekler group of $A$}, denoted by $G(A)$, is defined as follows;
 \begin{center}
     $G(A)=F(A)/\langle\{[a,b]\,|\,a,b\in A, a<b, \text{and } A\models R(a,b)\}\rangle$.
 \end{center}
\end{Def}
\par 
In other words, $G(A)$ is a group defined in the variety of nilpotent groups of class 2 and exponent $p$ such that the generators are the vertices of $A$ and that for any $a$ and $b$ in $G(A)$, $[a,b]=1$ if and only if $a<b$ and $A\models R(a,b)$.
\par
\medskip
We can see the definition in the point of view of vector spaces. Let $Z(F(A))$ be the center of $F(A)$. Then both $Z(F(A))$ and $F(A)/Z(F(A))$ are all elementary abelian $p$-groups so they can be considered as a $\mathbb{F}_p$-vector space with basis $\{[a,b]\,|\,a,b\in A, a<b\}$ and $\{a/Z(F(A))\,|\,a\in A\}$ respectively. 
The same is true for the Mekler group $G(A)$. If $Z(G(A))$ is the center of $G(A)$, then both $Z(G(A))$ and $G(A)/Z(G(A))$ can be considered as a $\mathbb{F}_p$-vector space. The basis of $Z(G(A))$ is $\{[a,b]\,|\,a,b\in A, a<b, \text{ and } A\models \neg R(a,b)\}$, and the basis of $G(A)/Z(G(A))$ is $\{a/Z(F(A))\,|\,a\in A\}$.

\begin{Def}
 For any element $g, h$ of $G(A)$, we say
 \begin{enumerate}
     \item $g\sim h$ if $C(g)=C(h)$, where $C(g)$ is the centraliser of $g$ in $G(A)$,
     \item $g\approx h$ if for some $c$ in the center $Z(G)$ and some $r$ $(0\leq r<p)$, $h=g^r \cdot c$,
     \item $g\equiv_Z h$ if $g\cdot Z(G)=h\cdot Z(G)$.
 \end{enumerate}
\end{Def}

\begin{Rem}
 For any element $g, h$ of $G(A)$, $g\equiv_Z h \Rightarrow g\approx h \Rightarrow g\sim h$.
\end{Rem}

\begin{Def}
 Let $g$ be an element in $G(A)$.
 \begin{enumerate}
     \item $g$ is \emph{isolated} if every non-central element of $G(A)$ which commutes with $g$ is $\approx$-equivalent to $g$.
     \item We say an element $g$ is \emph{of type $q$} if $q$ is the number of $\approx$-classes in the $\sim$-class of $g$.
     \item We say $g$ is \emph{of type $q^\iota$} (resp. $q^\nu$) if $g$ is of type $q$ and isolated (resp. of type $q$ and not isolated).
 \end{enumerate}
\end{Def}

\begin{Def}
 For every element $g$ of type $p$, we say an element $b$ is a \emph{handle of $g$} if it is of type $1^\nu$ and commutes with $g$.
\end{Def}

 As a remark, we note that for any $g$ of type $p$, the handle of $g$ exists and is unique up to $\sim$-equivalence.

\begin{Fact} \label{classification of elements of G(A)}
Every non-central element of $G(A)$ is of exactly one of the four type $1^\nu, 1^\iota, p-1, p$; the classes of elements of each type are 0-definable.
\end{Fact}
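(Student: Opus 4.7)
The plan is to exploit a canonical form for elements of $G(A)$ and read off the classification from the graph-theoretic structure of their supports. Working in $G(A)$, which is nilpotent of class $2$ and exponent $p$, every element admits a normal form
\[
g \;=\; v_{1}^{a_{1}}\cdots v_{n}^{a_{n}}\cdot z,
\]
where the $v_{i}$ are distinct vertex generators, $0<a_{i}<p$, and $z\in Z(G(A))$. The key technical input is that the centralizer $C(g)$ depends only on the \emph{support} $S(g)=\{v_{1},\dots ,v_{n}\}$ viewed as a finite subset of the graph: an element $h$ centralizes $g$ iff (modulo $Z(G(A))$) it commutes with every $v_{i}$, which is a purely graph-theoretic condition. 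Thus both $\sim$ and $\approx$ are determined by combinatorial data on $S(g)$ and on the exponent vector $(a_{1},\dots ,a_{n})$.

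Next I would split into cases according to $|S(g)|$ and the local structure of $A$ at $S(g)$. When $S(g)=\{v\}$ is a single vertex, niceness condition (a) forces distinct vertices to have distinct neighborhoods, so the $\sim$-class of $g$ coincides with its $\approx$-class, giving type $1$; the further split between $1^{\iota}$ and $1^{\nu}$ records whether some non-central handle commutes with $g$ without being $\approx g$. When $|S(g)|\geq 2$, the absence of triangles and squares (condition (b)) constrains how exponents may be permuted without altering the centralizer, and a counting argument on the coset space of $(a_{1},\dots ,a_{n})$-patterns that preserve $C(g)$ shows the number of $\approx$-classes inside $\sim(g)$ is exactly $p$ or $p-1$, depending on whether there is a vertex of $A$ that commutes with all of $S(g)$ or not. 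This handles the first assertion.

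For the definability assertion, I would observe that every notion in sight is first-order in the pure group language: being central is definable by $\forall x\,[x,y]=1$; $g\sim h$ is definable by $\forall x\,\bigl([x,g]=1\leftrightarrow [x,h]=1\bigr)$; $g\approx h$ is definable by $\exists r\;\exists c\,\bigl(c\in Z\wedge h=g^{r}c\bigr)$ (a finite disjunction over $r<p$); and ``there exist exactly $q$ many $\approx$-classes inside $\sim(g)$'' is a first-order count. Isolation is likewise definable since it asks that every non-central $h$ with $[g,h]=1$ satisfy $h\approx g$. Combining these yields 0-definable formulas isolating each of $1^{\nu},1^{\iota},p-1,p$.

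The main obstacle is the case analysis in the mid-range $|S(g)|\geq 2$: one must identify exactly which rearrangements of exponents preserve the centralizer, and show that niceness forces the count to land on $p$ or $p-1$ rather than some other value. This is the technical heart of Mekler's original argument, and once settled the first-order definability is routine. Since the statement is quoted from \cite{Hodges} and \cite{Mekler}, the cleanest route is to cite these directly rather than reproduce the combinatorial bookkeeping in full.
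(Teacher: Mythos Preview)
The paper does not prove this statement: it is stated as a \textbf{Fact}, taken over from Hodges's exposition of Mekler's construction (and ultimately from Mekler's original paper), with no argument given. Your concluding recommendation---to cite \cite{Hodges} and \cite{Mekler} directly rather than reproduce the bookkeeping---is precisely what the paper does, so in that sense your proposal and the paper agree.

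Your sketch of the underlying argument is broadly in the right spirit, and the definability paragraph is correct as it stands: centralizers, $\sim$, $\approx$, isolation, and the finite count of $\approx$-classes inside a $\sim$-class are all uniformly first-order in the group language, exactly as you say. One small inaccuracy in the case analysis: elements with single-vertex support are always of type $1^{\nu}$, never $1^{\iota}$. Niceness condition (a) guarantees every vertex $v$ has a neighbor $w$, and then $w$ commutes with $g=v^{a}z$ while $w\not\approx g$, so such $g$ is never isolated. The $1^{\iota}$ elements arise only among multi-vertex supports (roughly: supports with no common neighbor and no two-vertex ``bridge'' structure), so the $1^{\nu}/1^{\iota}$ split does not occur within the $|S(g)|=1$ case as your phrasing suggests. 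This does not affect your bottom line, since you explicitly defer the full combinatorics to the references.
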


Now, let $G$ be a model of Th$(G(A))$. Let us say an element of $G$ is proper if it is not a product of any elements of type $1^\nu$ in $G$.

\begin{Def}
\begin{enumerate}
    \item A \emph{$1^\nu$-transversal of $G$}, denoted by $X^\nu$, is a set consisting of one representative for each $\sim$-class of elements of type $1^\nu$ in $G$.
    \item A \emph{$p$-transversal of $G$}, denoted by $X^p$, is a set of pairwise $\sim$-inequivalent proper elements of type $p$ in $G$ which is maximal with the property that if $Y$ is a finite subset of $X^p$ and all elements of $Y$ have the same handle, then $Y$ is a independent modulo the subgroup generated by all elements of type $1^\nu$ in $G$ and $Z(G)$.
    \item A \emph{$1^\iota$-transversal of $G$}, denoted by $X^\iota$, is a set of representatives of $\sim$-classes of proper elements of type $1^\iota$ in $G$ shich is maximal independent modulo the subgroup generated by all elements of types $1^\iota$ and $p$ in $G$, together with $Z(G)$.
    \item A subset $X$ of $G$ is called a \emph{transversal of $G$} if it is the union of some $1^\nu$-transversal $X^\nu$, A $p$-transversal $X^p$, and $1^\iota$-transversal $X^\iota$ of $G$.
\end{enumerate}
\end{Def}

Note that all the sets in the above definition are definable.

\begin{Fact} \label{interpretation in Mekler gp}
Let $A$ be a nice graph. For a model $G\models$ Th$(G(A))$, define an interpretation $\Gamma$ such that $\Gamma(G)$ is a graph where the set of vertices is $\{g\in G\,|\, g$ is a noncentral element of type $1^\nu\}/\sim$ and the edge relation is $\{([g]_{\sim},[h]_{\sim})\,|\,[g,h]=1 \text{ in }G\}$. Then  $\Gamma(G)\models$ Th$(A)$.
\end{Fact}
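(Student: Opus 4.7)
The plan is to factor the statement through two standard observations. First, the description of $\Gamma$ is uniformly given by formulas in the pure group language: the class of noncentral type~$1^\nu$ elements is $0$-definable by Fact~\ref{classification of elements of G(A)}, the relation $g\sim h$ is expressible as $\forall z\,([z,g]=1\leftrightarrow[z,h]=1)$, and the edge predicate is simply $[g,h]=1$. Consequently $\Gamma$ is a definable interpretation that does not depend on the ambient model, so $G\equiv G(A)$ automatically yields $\Gamma(G)\equiv\Gamma(G(A))$. It therefore suffices to prove $\Gamma(G(A))\cong A$, since then $\Gamma(G)\models\mathrm{Th}(A)$ follows.

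To identify $\Gamma(G(A))$ with $A$, I would define $\phi\colon A\to\Gamma(G(A))$ by $a\mapsto[a]_\sim$ and verify, in order: (i) every generator $a\in A$ is a noncentral type~$1^\nu$ element of $G(A)$, so that $\phi$ takes values in the vertex set; (ii) $\phi$ is injective, using the nice-graph axiom~(a) of Definition~\ref{def of nice graph}, since a separating vertex $c$ for distinct $a,b\in A$ lies in $C(a)\setminus C(b)$ and forces $a\not\sim b$; (iii) $\phi$ preserves and reflects the edge relation, since by Definition~\ref{def of Mekler gp} distinct generators $a$ and $b$ commute in $G(A)$ exactly when $A\models R(a,b)$; and (iv) $\phi$ is surjective, that is, every noncentral type~$1^\nu$ element of $G(A)$ is $\sim$-equivalent to a single generator.

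The main obstacle is step~(iv), together with the type computation in step~(i). Both require the $\mathbb{F}_p$-vector space normal form recalled after Definition~\ref{def of Mekler gp}, in which every element has a unique expression modulo $Z(G(A))$ as an $\mathbb{F}_p$-combination of generators. One then computes centralizers coordinate-wise via the defining commutator relations: the no-triangle and no-square clauses of Definition~\ref{def of nice graph}(b) are exactly what is needed to show that any element whose image in $G(A)/Z(G(A))$ is not a scalar multiple of a single generator is forced to be of type $1^\iota$, $p-1$, or $p$ rather than $1^\nu$. This is precisely the commutator analysis carried out in \cite[Section~5.6]{Hodges} that underlies Fact~\ref{classification of elements of G(A)}, and once it is in hand the four-step verification above is mechanical.
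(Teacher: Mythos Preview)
The paper does not give its own proof of this statement: it is recorded as a \emph{Fact}, i.e.\ a result quoted from the literature (specifically from \cite{Hodges}). Your outline is correct and is precisely the standard argument one finds there: reduce via uniform interpretability to showing $\Gamma(G(A))\cong A$, then check that $a\mapsto[a]_\sim$ is a graph isomorphism using the normal-form commutator analysis. Since you, too, ultimately defer the hard step~(iv) and the type computation in~(i) to the centralizer calculations in \cite[Section~A.3]{Hodges}, your proposal and the paper's treatment amount to the same thing---a pointer to Hodges---except that you have helpfully unpacked what that pointer contains.
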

From \ref{interpretation in Mekler gp}, we see that if $X^\nu$ is a $1^\nu$-transversal,  then the set can be regarded as a graph which models Th$(A)$.

\begin{Fact} \label{factorization of Mekler gp}
Let $C$ be an infinite nice graph, and $G\models $ Th$(G(C))$. 
If $X=X^\nu\cup X^p\cup X^\iota$ is a transversal of $G$, then there is a subgroup $H_X\leq Z(G)$ such that $G=\langle X \rangle \times H_X$ for some $H_X\leq Z(G)$. Moreover, if $G$ is saturated and uncountable, then both $\Gamma(G)$ and $H_X$ are also saturated.
\end{Fact}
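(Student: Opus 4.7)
The plan is to argue in two phases: first establish an algebraic decomposition $G = \langle X\rangle \cdot Z(G)$, then use linear algebra to split off a complement of $\langle X\rangle\cap Z(G)$ inside $Z(G)$.

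For the first phase, I would use Fact \ref{classification of elements of G(A)}, which classifies every non-central element of $G$ into one of the types $1^\nu$, $1^\iota$, $p-1$, $p$. Any element $g\in G$ can be written as a product of proper elements of these types together with a central factor. One then resolves these factors in layers. By definition of $X^\nu$, every $\sim$-class of type-$1^\nu$ elements has a representative in $X^\nu$, so the type-$1^\nu$ contribution to $g$ can be rewritten as a word in $X^\nu$ up to a central correction. The maximality clause in the definition of $X^p$ guarantees that modulo the subgroup generated by type-$1^\nu$ elements and $Z(G)$, every proper type-$p$ element decomposes as a product of elements of $X^p$ sharing a common handle class; an analogous statement holds for $X^\iota$ modulo the subgroup generated by types $1^\nu$, $p$ and $Z(G)$. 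Iterating these three reductions in order gives $g\in\langle X\rangle\cdot Z(G)$.

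For the second phase, $G$ has exponent $p$, so $Z(G)$ is naturally an $\mathbb{F}_p$-vector space. Setting $K := \langle X\rangle \cap Z(G)$ and choosing any vector-space complement $H_X$ with $Z(G) = K\oplus H_X$, we get $H_X\leq Z(G)$ commuting automatically with $\langle X\rangle$; the first phase yields $G=\langle X\rangle\cdot Z(G) = \langle X\rangle\cdot K\cdot H_X = \langle X\rangle\cdot H_X$, and $\langle X\rangle\cap H_X \subseteq K\cap H_X = \{1\}$, so $G = \langle X\rangle\times H_X$ as required.

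For the saturation clause, suppose $G$ is saturated of uncountable cardinality $\kappa$. Since $\Gamma(G)$ is interpretable in $G$ by Fact \ref{interpretation in Mekler gp} and has cardinality $\kappa$, it inherits saturation in its own language. For $H_X$, I would select the complement so that it is $|X|$-definable (for instance by fixing a basis of $K$ and extending it to a basis of $Z(G)$), making $H_X$ isomorphic to the definable quotient $Z(G)/K$ via a definable map; saturation then transfers through this isomorphism. The main obstacle is the first phase: turning the pointwise classification of element types into a decomposition statement for arbitrary products requires careful induction, particularly the bookkeeping for type-$p$ elements grouped by their handle classes, and the verification that no proper element of type $1^\iota$ or $p$ slips past the maximality clauses. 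Once that algebraic step is in hand, the linear-algebra splitting and the saturation transfer are essentially routine.
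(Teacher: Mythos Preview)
The paper does not supply a proof of this statement at all: it is recorded as a \emph{Fact} in Section~4, which opens by declaring that the material there is recalled from \cite{Hodges} (and, for the later facts, from \cite{Chernikov Hempel}). So there is nothing in the paper to compare your argument against; you are attempting to reconstruct a result the author simply cites.

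On the merits of your sketch: the two-phase plan (show $G=\langle X\rangle\cdot Z(G)$, then split off a vector-space complement of $\langle X\rangle\cap Z(G)$ inside $Z(G)$) is the standard route, and your second phase is fine. In the first phase, however, you never account for elements of type $p-1$, which Fact~\ref{classification of elements of G(A)} lists as one of the four types; one needs the separate lemma that every such element already lies in the subgroup generated by type-$1^\nu$ elements together with $Z(G)$. Your opening line that an arbitrary $g$ is ``a product of proper elements of these types together with a central factor'' is also not an available starting point---``proper'' is defined precisely to mean \emph{not} such a product in the $1^\nu$ case---so the cleaner way in is to pass to the $\mathbb{F}_p$-vector space $G/Z(G)$ and argue that the images of $X$ span it, handling the four types in turn.

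Your saturation argument for $H_X$ has a genuine gap. Making $H_X$ definable over a parameter set of size $|X|$ buys nothing, since $|X|$ can equal $|G|$, and saturation does not transfer along $|G|$-definable bijections. The intended argument is different: $H_X$ is an $\mathbb{F}_p$-vector space, whose theory is uncountably categorical, so it suffices to check that $H_X$ has cardinality $|G|$ (equivalently, infinite dimension). That cardinality computation is the real content of the saturation claim for $H_X$, and your sketch does not address it.
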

Since $H_X$ is an elementary abelian $p$-group, we sometimes say $G$ is isomorphic to $\langle X \rangle \times \langle H_X\rangle$.

\begin{Fact}
Let $G$ is a saturated model of Th$(G(C))$ and let $\kappa=|G|$. If $X=X^\nu\cup X^p\cup X^\iota$ is a transversal of $G$, then
\begin{enumerate}[(a)]
    \item for any $x^\nu\in X^\nu$, the cardinality of $\{x^p\in X^p\,|\,x^\nu \text{ is the handle of } x^p\}$ is either zero or $\kappa$, and
    \item $|X^\iota|=\kappa$
\end{enumerate}
\end{Fact}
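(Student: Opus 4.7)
The plan is to reduce both parts to a common template: by construction, the relevant subset of the transversal is an $\mathbb{F}_p$-basis of a quotient vector space definable in $G$, so its cardinality equals the dimension of that quotient; and $\kappa$-saturation forces any infinite-dimensional definable $\mathbb{F}_p$-vector space in a model of size $\kappa$ to have dimension exactly $\kappa$.

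For (a), I would fix $x^\nu\in X^\nu$ and introduce the set $D$ of proper type-$p$ elements of $G$ whose handle is $\sim$-equivalent to $x^\nu$; this set is definable over $x^\nu$ by Fact \ref{classification of elements of G(A)}. If $D=\emptyset$ there is nothing to prove. Otherwise, by Fact \ref{interpretation in Mekler gp}, $x^\nu$ corresponds to a vertex $v$ of $\Gamma(G)\models\text{Th}(C)$, and the combinatorial analysis of Mekler groups in \cite{Hodges} shows that once a single proper type-$p$ element with handle $v$ exists in $G(C)$, infinitely many pairwise $\sim$-inequivalent such elements exist that are $\mathbb{F}_p$-independent modulo the subgroup $N$ generated by all type-$1^\nu$ elements together with $Z(G)$; by elementarity the same configuration is realized in $G$. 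Since $N\supseteq Z(G)$ and $G/Z(G)$ is elementary abelian of exponent $p$, the quotient $G/N$ carries the structure of an $\mathbb{F}_p$-vector space; the image of $D$ spans a subspace $V$, and the maximality clause defining the $p$-transversal says precisely that the image of $X^p\cap D$ is an $\mathbb{F}_p$-basis of $V$. Hence $|X^p\cap D|=\dim_{\mathbb{F}_p}V$, which is infinite by the above and therefore equal to $\kappa$ by $\kappa$-saturation of $G$. Part (b) then follows by the same template applied to proper type-$1^\iota$ elements modulo the subgroup generated by all type-$1^\nu$ and type-$p$ elements together with $Z(G)$, using that $C$ being infinite and nice guarantees infinitely many independent type-$1^\iota$ configurations in $G(C)$.

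The principal obstacle is verifying that the relevant quotient vector spaces are infinite-dimensional in $G(C)$ whenever they are nonzero. This is a purely combinatorial statement about Mekler's presentation, which ultimately reduces to the fact that $G(A)/Z(G(A))$ is an $\mathbb{F}_p$-vector space with basis the vertices of $A$, together with a case analysis showing that for an infinite nice graph $C$ one can vary the underlying graph configuration of each type in countably many $\mathbb{F}_p$-independent ways. These bookkeeping details I would import from \cite{Hodges} rather than reprove in the text.
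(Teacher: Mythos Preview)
The paper does not supply a proof: the statement is recorded as a \emph{Fact}, part of the background on Mekler's construction imported from \cite{Hodges} and \cite{Chernikov Hempel}, and no argument is given in the text. There is therefore nothing in the paper to compare your proposal against directly.

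Your sketch is along the right lines and matches in spirit how the result is obtained in those sources. One technical point deserves care: you assert that the set $D$ of \emph{proper} type-$p$ elements with handle $x^\nu$ is definable over $x^\nu$, invoking Fact~\ref{classification of elements of G(A)}. That fact guarantees definability of the four types $1^\nu,1^\iota,p-1,p$, but ``proper'' means ``not a product of elements of type $1^\nu$,'' and the subgroup $N$ generated by the type-$1^\nu$ elements together with $Z(G)$ is a priori only type-definable, so $D$ need not be a definable set and $G/N$ is not literally a definable vector space. This does not break the argument: the saturation step only requires that, for each finite tuple of parameters, the existence of a further type-$p$ element with handle $x^\nu$ that is independent from them modulo $N$ be expressible by a consistent partial type over those parameters, which it is. Your write-up should be rephrased at that level rather than in terms of a single definable quotient.
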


As we mentioned in \ref{interpretation in Mekler gp}, $X^\nu$ can be regarded as a graph where two vertices are joined (connected by a single edge) if they commute in $G$. 
In this point of view, we can find a supergraph by extending the set of vertices to $X$ and then giving the edge relation with the same rule. 
Then each $x^p\in X^p$ is joined to a unique vertex in $X^\nu$, which is the handle of $x^p$, while each $x^\iota\in X^\iota$ is joined to no vertex.
\par
This kind of supergraph is called a cover. See \cite{Chernikov Hempel} for more precise proof.
\par
\medskip
We give more facts from \cite{Chernikov Hempel}.
\begin{Fact} \label{ext in MG}
 \begin{enumerate}
    \item Let $G$ be a saturated model of Th$(G(C))$, and let $X$ and $H_X$ be the sets in \ref{factorization of Mekler gp} so that $G=\langle X \rangle \times H_X$. If $f$ is a bijection between two small sets $Y=Y^\nu\cup Y^p \cup Y^\iota$ and $Z=Z^\nu\cup Z^p\cup Z^\iota$ of $X$ with the following properties;
    \begin{enumerate}
        \item $f(Y^\nu)=Z^\nu$, $f(Y^p)=Z^p$, and $f(Y^\iota)=Z^\iota$,
        \item for any $y^p\in Y^p$, the handle of $y^p$ is same as the handle of $f(y^p)$,
        \item tp$(Y^\nu)$=tp$(Z^\nu)$ in $\Gamma$,
    \end{enumerate}
    then $f$ can be extended to an automorphism $\sigma$ of $G$.
    \par
    Moreover, for any $\bar{h}$, $\bar{k}\in H_X$, if tp$(\bar{h})$=tp$(\bar{k})$ in $H_X$, then we may assume $\sigma$ sends $\bar{h}$ to $\bar{k}$.
    \item Let $G$ be a model of Th$(G(C))$, and $\bar{x}=\bar{x}^\nu{}^\frown\bar{x}^p{}^\frown\bar{x}^\iota$ and $\bar{y}$ be two small tuples of variables. Then there is a partial type $\pi(\bar{x},\bar{y})$ such that $G\models \pi(\bar{a},\bar{b})$ if and only if we can extend $\bar{a}$ to a transversal $X$ of $G$ and find $H$ containing $\bar{b}$ so that $H$ is an independent set in $Z(G)$ and $G=\langle X \rangle \times \langle H\rangle$.
 \end{enumerate}
\end{Fact}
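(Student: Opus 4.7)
The plan is to prove the two clauses separately: (1) by a back-and-forth across the three components of a transversal of $G$, exploiting the decomposition $G=\langle X\rangle\times H_X$; and (2) by reading off a partial type from the definability observations already collected in Section~4. The unifying point is that the group structure on $\langle X\rangle$ is determined, via commutation relations of the generators, by the induced graph on $X^\nu$ together with the handle function $X^p\to X^\nu/{\sim}$, so any bijection of $X$ respecting these data will lift.

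For part (1), I would extend $f$ to an automorphism of $G$ in three stages. First, using that the induced graph on $X^\nu$ is a saturated model of $\mathrm{Th}(A)$ by Fact~\ref{factorization of Mekler gp} and that $\mathrm{tp}(Y^\nu)=\mathrm{tp}(Z^\nu)$ in $\Gamma$ by hypothesis~(c), extend $f$ restricted to $Y^\nu$ to a graph automorphism $\sigma^\nu$ of $X^\nu$. Second, for each handle $h\in X^\nu$, the fibre $X^p_h=\{x^p\in X^p \mid \text{handle of }x^p\text{ is }h\}$ has cardinality $0$ or $\kappa=|G|$; since $Y^p\cap X^p_h$ and $Z^p\cap X^p_h$ are small and matched by $f$ fibrewise via hypothesis~(b), a cardinality argument extends $f$ to a bijection $\sigma^p$ of $X^p$ over $\sigma^\nu$ preserving handles. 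Third, using $|X^\iota|=\kappa$, extend $f$ restricted to $Y^\iota$ to a bijection $\sigma^\iota$ of $X^\iota$. The combined bijection $\tau=\sigma^\nu\cup\sigma^p\cup\sigma^\iota$ of $X$ preserves all commutation relations among generators, so it induces an automorphism of $\langle X\rangle$; extending by the identity on $H_X$ produces the desired automorphism $\sigma$ of $G$. For the ``moreover'' clause, $H_X$ is a saturated $\mathbb{F}_p$-vector space, so $\mathrm{tp}(\bar h)=\mathrm{tp}(\bar k)$ gives a linear automorphism of $H_X$ sending $\bar h$ to $\bar k$; replacing the identity on $H_X$ by this automorphism yields a refined $\sigma$ with the desired behavior on $H_X$.

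For part (2), I would build $\pi(\bar x,\bar y)$ as a conjunction of the following partial types, each of which is available from Section~4: the coordinates of $\bar x^\nu$, $\bar x^p$, $\bar x^\iota$ lie in the $0$-definable classes of types $1^\nu$, $p$, and $1^\iota$ respectively (Fact~\ref{classification of elements of G(A)}); entries of $\bar x^\nu$ are pairwise $\sim$-inequivalent (a formula about centralisers); each entry of $\bar x^p$ is proper (a partial type) and has a handle matched by a definable relation; the independence conditions defining a $p$-transversal and a $1^\iota$-transversal are partial types over the already-specified generators, because independence modulo a type-definable subgroup is itself type-definable; finally $\bar y\subseteq Z(G)$ is $\mathbb{F}_p$-linearly independent and complements $\langle\bar x\rangle$, which again is expressible as a partial type. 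The main obstacle, shared across the two parts, is verifying that the bijection $\tau$ in part (1) really does induce a group automorphism of $\langle X\rangle$: one has to be sure no commutator identity beyond those encoded in the graph and handle data is disturbed, which amounts to checking that the defining relations of the Mekler quotient are symmetric under $\tau$. Once this is secured, the rest is combinatorial bookkeeping in $X$ and routine linear algebra in $H_X$.
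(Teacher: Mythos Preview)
The paper does not prove this statement: it is recorded as a \emph{Fact} imported from \cite{Chernikov Hempel} (note the sentence ``We give more facts from \cite{Chernikov Hempel}'' immediately preceding it), so there is no in-paper argument to compare against. Your outline is essentially the standard proof one finds in that reference: build a commutation-preserving bijection of the transversal $X$ piecewise on $X^\nu$, $X^p$, $X^\iota$, push it to an automorphism of $\langle X\rangle$, and handle $H_X$ separately as a saturated $\mathbb F_p$-vector space.

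There is, however, one point in your part~(1) that needs more care. You extend $f\restriction Y^\nu$ to a graph automorphism $\sigma^\nu$ of all of $X^\nu$, and then say you will extend $f\restriction Y^p$ to a bijection $\sigma^p$ of $X^p$ ``over $\sigma^\nu$ preserving handles''. But being ``over $\sigma^\nu$'' means $\mathrm{handle}(\sigma^p(x))=\sigma^\nu(\mathrm{handle}(x))$, whereas hypothesis~(b) says $\mathrm{handle}(f(y^p))=\mathrm{handle}(y^p)$. These two requirements agree on $Y^p$ only if $\sigma^\nu$ \emph{fixes} every handle of an element of $Y^p$, and nothing in your construction of $\sigma^\nu$ guarantees that. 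The fix is easy---before invoking saturation of $\Gamma(G)$, adjoin this small set of handles to both $Y^\nu$ and $Z^\nu$ as fixed points of $f$ (hypothesis~(b) makes this consistent with~(c))---but it should be said explicitly. This is precisely why, each time the paper applies this Fact in Section~5, it first ``add[s] handles of elements in the tuple $\bar x^p_\eta$ to the beginning of $\bar x^\nu_\eta$''. With that adjustment, your sketch for~(1), and your outline for~(2), match the approach of \cite{Chernikov Hempel}.
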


\section{Preseervation}

In \cite{Dzamonja Shelah}, džamonja and Shelah introduced a way to choose a monochromatic subtree when the given tree $\null^{<\kappa}2$ is colored by $\theta$ many colors where $|\theta|<\kappa$ and $\kappa$ is a regular cardinal. The following is the key observation.

\begin{Fact} \label{monochrom fact}
Let $\kappa$ be a regular cardinal and let $\theta$ be a set of colors such that $|\theta|<\kappa$. For any coloring $f:\null^{<\kappa}2\rightarrow\theta$, there is a color $c\in\theta$ and an element $\nu^\ast\in\null^{<\kappa}2$ such that for any $\nu\in\null^{<\kappa}2$ satisfying $\nu^\ast\unlhd\nu$, there is $\rho\in\null^{<\kappa}2$ with $\nu\unlhd\rho$, $f(\rho)=c$.
\end{Fact}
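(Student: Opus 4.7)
The plan is to argue by contradiction, pruning the tree one color at a time and exploiting the regularity of $\kappa$ to pass a limit.

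First, I would write down the negation carefully: the conclusion fails precisely if for every color $c \in \theta$ and every node $\nu^\ast \in \null^{<\kappa}2$, there is some $\nu \in \null^{<\kappa}2$ with $\nu^\ast \unlhd \nu$ such that no $\rho \unrhd \nu$ satisfies $f(\rho) = c$. In words, every color $c$ can be ``killed'' above any given node by passing to a suitable extension.

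Assuming this, I would enumerate the colors as $\theta = \{c_\alpha : \alpha < |\theta|\}$ and recursively construct an increasing chain $(\nu_\alpha)_{\alpha \leq |\theta|}$ in $\null^{<\kappa}2$ as follows. Set $\nu_0 = \langle\rangle$. Given $\nu_\alpha$, use the negated hypothesis applied to the color $c_\alpha$ and the node $\nu_\alpha$ to pick $\nu_{\alpha+1} \unrhd \nu_\alpha$ such that $f(\rho) \neq c_\alpha$ for every $\rho \unrhd \nu_{\alpha+1}$. At a limit ordinal $\alpha \leq |\theta|$, let $\nu_\alpha = \bigcup_{\beta < \alpha} \nu_\beta$; this is a well-defined binary string whose length is $\sup_{\beta<\alpha}|\nu_\beta|$, and since $\alpha \leq |\theta| < \kappa$ and $\kappa$ is regular with each $|\nu_\beta| < \kappa$, this supremum is still $< \kappa$, so $\nu_\alpha \in \null^{<\kappa}2$. (This is the step where regularity of $\kappa$ together with $|\theta| < \kappa$ is essential, and is really the only substantive point in the argument.)

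Finally, consider $\nu_{|\theta|} \in \null^{<\kappa}2$ and let $c_\beta := f(\nu_{|\theta|})$. Since $\beta < |\theta|$ we have $\nu_{\beta+1} \unlhd \nu_{|\theta|}$, so by construction $f(\nu_{|\theta|}) \neq c_\beta$, a contradiction. The main (mild) obstacle is just to be careful that the transfinite recursion does not run out of the tree at a limit, which is precisely why the cardinal assumption $|\theta| < \kappa$ together with regularity of $\kappa$ is used.
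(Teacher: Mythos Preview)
Your argument is correct: the negation says every color can be avoided on a cone above any given node, and the transfinite chain of length $|\theta|+1$ (with unions at limits, legitimate by regularity of $\kappa$ and $|\theta|<\kappa$) produces a node whose own color has already been excluded, a contradiction.

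Note, however, that the paper does not give a proof of this statement at all: it is recorded as a \emph{Fact} and attributed to D\v{z}amonja--Shelah, so there is no in-paper argument to compare yours against. Your proof is the standard one and would be perfectly appropriate to include; it is essentially the argument one finds in the cited source.
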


Let us say a subtree $B\subseteq A$ is $f$-monochromatic if there is a color $c$ such that for all $b\in B$, $f(b)=c$.

\begin{Lemma} \label{monochrom lemma}
Let $\kappa$ be an uncountable regular cardinal and $f:\null^{<\kappa}2\rightarrow\omega$ be a coloring.
 \begin{enumerate}
     \item If $\varphi(x;y)$ and $(a_\eta \,|\, \eta \in \null^{<\kappa} 2 )$ witness SOP$_2$, then there is a $f$-monochromatic subtree $(a'_\eta \,|\, \eta \in \null^{<\omega} 2 )$ of $(a_\eta \,|\, \eta \in \null^{<\kappa} 2 )$ such that $\varphi$ and $(a'_\eta \,|\, \eta \in \null^{<\omega} 2 )$ witness SOP$_2$.
     \item If $\varphi(x;y)$ and $(a_\eta \,|\, \eta \in \null^{<\kappa} 2 )$ witness SOP$_1$, then there is a $f$-monochromatic subtree $(a'_\eta \,|\, \eta \in \null^{<\omega} 2 )$ of $(a_\eta \,|\, \eta \in \null^{<\kappa} 2 )$ such that $\varphi$ and $(a'_\eta \,|\, \eta \in \null^{<\omega} 2 )$ witness SOP$_1$.
     \item If $\varphi(x;y)$ and $(a_\eta \,|\, \eta \in \null^{<\kappa} \kappa )$ witness TP$_1$, then there is a $f$-monochromatic subtree $(a'_\eta \,|\, \eta \in \null^{<\omega} \omega )$ of $(a_\eta \,|\, \eta \in \null^{<\kappa} \kappa )$ such that $\varphi$ and $(a'_\eta \,|\, \eta \in \null^{<\omega} \omega )$ witness TP$_1$.
 \end{enumerate}
\end{Lemma}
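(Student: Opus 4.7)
The plan is to reduce all three parts to a single construction: apply Fact~\ref{monochrom fact} (or its straightforward adaptation to $\null^{<\kappa}\kappa$ in part (3)) to obtain a color $c\in\omega$ and a stem $\nu^\ast$ such that every node $\unrhd\nu^\ast$ has some further extension of color $c$, and then recursively build an embedding $g$ of the small target tree ($\null^{<\omega}2$ in parts (1)--(2), $\null^{<\omega}\omega$ in part (3)) into the large witnessing tree so that $f(g(\eta))=c$ for every $\eta$. Concretely, put $g(\langle\rangle)$ to be any extension of $\nu^\ast$ of color $c$; given $g(\eta)$, for each admissible child index $i$ pick an immediate extension $g(\eta)^\frown\iota_i$ (choosing $\iota_0<\iota_1<\cdots$ in the appropriate index set, namely $\{0,1\}$ in parts (1)--(2) and an increasing $\omega$-sequence in $\kappa$ in part (3)) and apply Fact~\ref{monochrom fact} to lengthen it to some $g(\eta^\frown i)$ of color $c$. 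The resulting $g$ preserves $\unlhd$ and $<_{lex}$, so its image is an $f$-monochromatic subtree; set $a'_\eta:=a_{g(\eta)}$.

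For parts (1) and (3) the inheritance of the witness property is immediate. Any branch of the small tree lifts through $g$ to a chain in the large tree, whose union continues to a full branch there, so path-consistency is inherited; any $\bot$-incomparable pair in the small tree remains $\bot$-incomparable after $g$, so the SOP$_2$ (resp.\ TP$_1$) inconsistency transfers directly.

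The delicate case is part (2). The inconsistency clause $\xi^\frown 0\unlhd\nu\Rightarrow\{\varphi(x;a_{g(\xi^\frown 1)}),\varphi(x;a_{g(\nu)})\}$ inconsistent pins down the node $g(\xi^\frown 1)$, which is generally a proper extension of $g(\xi)^\frown 1$ rather than itself an immediate $1$-child of $g(\xi)$. To bridge this gap I would first, before any subtree extraction, apply the modeling property (Fact~\ref{str indisc have MP}) to replace the parameters by a strongly indiscernible $(a_\eta)_{\eta\in\null^{<\kappa}2}$ based on them; the SOP$_1$ pattern is first-order by compactness, so it passes through the EM-type to the new tree. Then the pair $(g(\xi^\frown 1),g(\nu))$ has the same quantifier-free $L_0$-type as $(g(\xi)^\frown 1,g(\xi)^\frown 0)$: both pairs meet at $g(\xi)$, both are $\bot$-incomparable, and in both the $1$-direction component is $<_{lex}$-greater. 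Strong indiscernibility then yields $a_{g(\xi^\frown 1)}a_{g(\nu)}\equiv a_{g(\xi)^\frown 1}a_{g(\xi)^\frown 0}$, and the latter pair is inconsistent by applying the SOP$_1$ clause at $\xi'=g(\xi)$, $\nu'=g(\xi)^\frown 0$.

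The main obstacle is precisely this asymmetry in SOP$_1$, which forces the preliminary reduction to strong indiscernibility before the monochromatic extraction. A lesser technical point is the adaptation of Fact~\ref{monochrom fact} to $\null^{<\kappa}\kappa$ needed in part (3), which should follow by repeating the Džamonja--Shelah argument with only notational changes.
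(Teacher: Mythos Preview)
Your construction for parts (1) and (3) is correct and matches the paper's; the only cosmetic difference is that for (3) the paper restricts first to the binary subtree $\null^{<\kappa}2\subseteq\null^{<\kappa}\kappa$, applies (1), and then fans out via $a''_{\eta^\frown i}:=a'_{\rho^\frown 1^i{}^\frown 0}$, thereby avoiding a re-proof of Fact~\ref{monochrom fact} for wider trees. Your direct adaptation works as well.

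Part (2), however, has a genuine gap. Your plan hinges on first passing to a strongly indiscernible tree via the modeling property and asserting that ``the SOP$_1$ pattern is first-order by compactness, so it passes through the EM-type.'' This is exactly the step that fails, and the paper flags it explicitly just before Fact~\ref{comb in tree}: if $B$ is strongly indiscernible and based on an SOP$_1$ tree $A$, there is no guarantee that $B$ retains the SOP$_1$ inconsistency. The reason is that the clause ``$\mu$ is the immediate $1$-successor of $\mu\wedge\tau$'' is \emph{not} determined by $\mathrm{qftp}_{L_0}(\mu,\tau)$. For instance, in $\null^{<\omega}2$ the pairs $(\langle 1\rangle,\langle 0,0\rangle)$ and $(\langle 1,1\rangle,\langle 0\rangle)$ have the same $L_0$-type (incomparable, meet $\langle\rangle$, second component $<_{lex}$ first), yet the SOP$_1$ inconsistency clause applies to the first pair and not to the second. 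Hence $\neg\exists x\,(\varphi(x;y_1)\wedge\varphi(x;y_2))$ need not lie in $\mathrm{EM}_{L_0}(A)$ for this type-class, and your final step --- invoking SOP$_1$ at $(g(\xi)^\frown 1,g(\xi)^\frown 0)$ in the strongly indiscernible tree --- is unjustified. A secondary problem is that after applying the modeling property the new parameters are no longer drawn from $(a_\eta)$, so what you produce is not literally a subtree of the given tree as the lemma requires.

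The paper's fix is purely combinatorial: it strengthens Fact~\ref{monochrom fact} so that the color-$c$ extension $\rho$ can moreover be chosen of the form $\xi^\frown 1$. One then sets $a'_{\eta^\frown 1}:=a_{\xi^\frown 1}$ (a genuine immediate $1$-successor in the original tree) and $a'_{\eta^\frown 0}:=a_{\rho''}$ for some $\rho''\unrhd\xi^\frown 0$ of color $c$. Now whenever $\eta^\frown 0\unlhd\nu$ in the extracted tree, the image of $\nu$ lies above $\xi^\frown 0$ in the big tree, and the original SOP$_1$ clause applies directly at $\xi$ --- no indiscernibility is needed.
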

\begin{proof}
(1) Let $c$ and $\nu^\ast$ be the element in Fact \ref{monochrom fact}. We construct a subtree $(a'_\eta \,|\, \eta \in \null^{<\omega} 2 )$ in $(a_\eta | \eta \in \null^{<\kappa} 2 )$ inductively as follows;
\begin{itemize}
    \item $a'_{\langle\rangle}= a_\rho$ where $\rho$ is any element satisfying $\nu^\ast\unlhd\rho$ and $f(\rho)=c$,
    \item for any $\eta\in\null^{<\omega} 2$, if $a'_\eta$ = $a_\rho$ for some $\rho\in\null^{<\kappa} 2$ , then $a'_{\eta^\frown0}= a_{\rho'}$ where $\rho^\frown0\unlhd\rho'$ and $f(\rho')=c$, and $a'_{\eta^\frown1}= a_{\rho'}$ where $\rho^\frown1\unlhd\rho'$ and $f(\rho')=c$.
\end{itemize}
$\varphi$ and the subtree witness SOP$_2$.\par
(2) We use a different version of the Fact \ref{monochrom fact} which has one more condition on $\rho$: $\rho$ is of the form $\xi^\frown1$ for some $\xi$. 
This can be proved by the same way used in \ref{monochrom fact}, so we omit the proof.\par
Define $a'_{\langle\rangle}$ as same as the previous one, but give a small modification in the induction step. For any $\eta\in\null^{<\omega} 2$, let $a'_\eta$ = $a_\rho$ for some $\rho\in\null^{<\kappa} 2$. 
Take $a'_{\eta^\frown1}= a_{\rho'}$ where $\rho\unlhd\rho'$, $f(\rho')=c$ and $\rho'$ is of the form $\xi^\frown1$ for some $\xi$.
Then take $a'_{\eta^\frown0}= a_{\rho''}$ where $\xi^\frown0\unlhd\rho''$ and $f(\rho'')=c$. $\varphi$ and the subtree witness SOP$_1$, too.\par
(3) In $(a_\eta \,|\, \eta \in \null^{<\kappa} \kappa )$, consider the subtree $(a_\eta \,|\, \eta \in \null^{<\kappa} 2 )$. 
Find $(a'_\eta \,|\, \eta \in \null^{<\omega} 2 )$ as in the proof of (1), and then construct $(a''_\eta \,|\, \eta \in \null^{<\omega} \omega )$ inductively as follows;
\begin{itemize}
    \item $a''_{\langle\rangle}= a'_{\langle\rangle}$,
    \item for any $\eta\in\null^{<\omega}\omega$, if $a''_\eta=a_\rho$ for some $\rho\in\null^{<\omega}2$, then $a''_{\eta^\frown i}=a'_{\rho^\frown {1^i}^\frown 0 }$ for each $i<\omega$.
\end{itemize}
$\varphi$ and the last subtree witness TP$_1$, too.
\end{proof}

\begin{Thm} \label{ntp1 preservation}
For any infinite nice graph C, Th(G(C)) is NTP$_1$ if and only if Th(C) is NTP$_1$.
\end{Thm}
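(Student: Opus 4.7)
The plan is to handle the two directions separately. The implication ``Th$(G(C))$ NTP$_1$ $\Rightarrow$ Th$(C)$ NTP$_1$'' is routine: by Fact \ref{interpretation in Mekler gp}, $\Gamma(G) \models$ Th$(C)$ is interpretable in any $G \models$ Th$(G(C))$ (as the $\sim$-classes of $1^\nu$-elements with commutation as the edge relation), so any TP$_1$-witnessing formula in Th$(C)$ pulls back to one in Th$(G(C))$, and contrapositively NTP$_1$ transfers from Th$(G(C))$ down to Th$(C)$.

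For the preservation direction, assume Th$(C)$ is NTP$_1$ and verify clause (2) of Proposition \ref{ntp1 main cor} for Th$(G(C))$. Fix a sufficiently saturated $G \models$ Th$(G(C))$, a large regular $\kappa$, a strongly indiscernible tree $(a_\eta)_{\eta \in \null^{<\kappa}\kappa}$ of finite tuples in $G$, and a finite tuple $b \in G$. By Fact \ref{factorization of Mekler gp}, write $G = \langle X \rangle \times \langle H \rangle$ with $X = X^\nu \cup X^p \cup X^\iota$ a transversal and $H \subseteq Z(G)$ independent. Put each coordinate of each $a_\eta$ into normal form as a product of transversal generators, commutators $[x^\nu_i, x^\nu_j]$ arising from non-edges, and entries of $H$. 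Strong indiscernibility forces the combinatorial shape of this expression --- which transversal types occur, which coincidences of $\sim$-classes occur, which $1^\nu$-generators serve as handles, and which $H$-coordinates appear --- to depend only on the quantifier-free type of $\eta$. Passing to a strongly indiscernible subtree via the modeling property (Fact \ref{str indisc have MP}) together with a coloring argument in the spirit of Lemma \ref{monochrom lemma}(3), I would further arrange that the non-graph contributions stabilise along the $0^\alpha$-branch.

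Let $c_\eta$ denote the \emph{graph-part} of $a_\eta$: the tuple of $\sim$-classes of its $1^\nu$-constituents together with the handles of its $p$-constituents. Then $(c_\eta)_{\eta \in \null^{<\kappa}\kappa}$ is a strongly indiscernible tree in the graph reduct $\Gamma(G) \models$ Th$(C)$. Letting $b_\Gamma$ be the graph-part of $b$ and applying Proposition \ref{ntp1 main cor}(2) inside $\Gamma(G)$ yields some $\beta < \kappa$ and $b'_\Gamma$ with $(c_{{0^\beta}^\frown i})_{i<\omega}$ indiscernible over $b'_\Gamma$ in $\Gamma(G)$ and tp$(b_\Gamma/c_{{0^\beta}^\frown 0}) = $ tp$(b'_\Gamma/c_{{0^\beta}^\frown 0})$ in $\Gamma(G)$. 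Using Fact \ref{ext in MG}, I then assemble a tuple $b' \in G$ whose graph-part is (a conjugate of) $b'_\Gamma$ and whose $1^\iota$-, $p$-, commutator-, and $H$-contributions are copied from those of $b$. The graph-level type equality together with the stabilisation of non-graph data along the branch feeds into Fact \ref{ext in MG}(1) to produce an automorphism of $G$ fixing $a_{{0^\beta}^\frown 0}$ and sending $b$ to $b'$; graph-level indiscernibility of $(c_{{0^\beta}^\frown i})$ over $b'_\Gamma$ combined with the same stabilisation yields indiscernibility of $(a_{{0^\beta}^\frown i})_{i<\omega}$ over $b'$, verifying Proposition \ref{ntp1 main cor}(2).

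The main obstacle is the normal-form/stabilisation step. In contrast to the NTP$_2$ setting of Chernikov--Hempel, where one only needs to synchronise rows of a mutually indiscernible array, here strong indiscernibility must control the non-graph decomposition of $a_\eta$ not only down a single branch but across meets and siblings, so that the $H$-coordinates and commutator coordinates fit together coherently enough for Fact \ref{ext in MG}(1) to produce the required automorphism. Once this bookkeeping is in place, the transfer of the NTP$_1$ condition of Proposition \ref{ntp1 main cor} from $\Gamma(G)$ to $G$ is essentially a matter of combining graph-level indiscernibility and type data with the extension lemma.
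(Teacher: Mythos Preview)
Your overall architecture is right --- decompose parameters via a transversal, push the graph part into $\Gamma(G)$, handle the central part separately, and glue back using Fact~\ref{ext in MG} --- but there is a genuine gap in how you set it up.

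You propose to verify clause~(2) of Proposition~\ref{ntp1 main cor} directly: given an arbitrary strongly indiscernible $(a_\eta)$ and $b$, produce $\beta$ and $b'$. The problem is the decomposition step. Writing $a_\eta = t_\eta(\bar x_\eta,\bar h_\eta)$ is a \emph{choice}, not something determined by $\operatorname{tp}(a_\eta)$; strong indiscernibility of $(a_\eta)$ in $G$ does \emph{not} force the tuples $(\bar x_\eta,\bar h_\eta)$ to be strongly indiscernible, nor even force the terms $t_\eta$ and the lengths to be constant. To repair this you invoke the modeling property and Lemma~\ref{monochrom lemma}, but both of those replace the tree by a \emph{new} tree (monochromatic, then strongly indiscernible on the decomposed tuples). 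Once you do that, you are no longer proving clause~(2) for the tree you were handed; you are proving it for a different tree, which is not what the direct verification requires.

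The paper sidesteps this by arguing by contradiction: assume $\varphi$ and $(a_\eta)$ witness TP$_1$ in $G$, decompose, apply Lemma~\ref{monochrom lemma} to uniformise $t$ and the tuple lengths, \emph{then} apply the modeling property to the decomposed tree $(\bar x_\eta{}^\frown\bar h_\eta)$. The resulting strongly indiscernible tree $(\bar c_\eta)$ is new, but that is harmless: it still witnesses TP$_1$ for $\varphi'(x,y'):=\varphi(x,t(y'))$, and Fact~\ref{ext in MG}(2) lets one assume the new $\bar c_\eta$'s live in a transversal decomposition $G=\langle Y\rangle\times\langle M\rangle$. Only now does one take a branch realisation $c$, decompose it too, and apply Proposition~\ref{ntp1 main cor}(3) twice --- once in $\Gamma(G)$ (NTP$_1$ by hypothesis) and once in $\langle M\rangle$ (a vector space, hence stable, hence NTP$_1$) --- to get a common $\beta$, then invoke Fact~\ref{ext in MG}(1) for the contradiction. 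Your ``stabilisation of non-graph data'' is too vague here: the $H$-part is not disposed of by indiscernibility bookkeeping alone but by a second, independent appeal to Proposition~\ref{ntp1 main cor} using stability of the vector-space sort.

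In short: switch from ``verify (2) for the given tree'' to ``assume TP$_1$ and derive a contradiction'', so that you are free to replace the tree after decomposing; and treat the $\langle M\rangle$-coordinates by a parallel application of Proposition~\ref{ntp1 main cor} rather than by hoping strong indiscernibility alone stabilises them.
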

\begin{proof}
Since $C$ is interpretable in $G(C)$, if Th$(C)$ has TP$_1$, then Th$(G(C))$ also has TP$_1$.\par
Suppose Th$(C)$ is NTP$_1$ and Th$(G(C))$ has TP$_1$. Let $G$ be a monster model of Th$(G(C))$, $X$ be a transversal of $G$ so that $G=\langle X \rangle \times \langle H\rangle$ for some $H$ as in Fact \ref{factorization of Mekler gp}. 
We have a formula $\varphi(x,y)$ and a tree $(a_\eta \,|\, \eta \in \null^{<\kappa} \kappa )$ of finite tuples in $G$ for some sufficiently large regular cardinals $\kappa$ so that they witness TP$_1$. 
Note that for each $\eta \in \null^{<\kappa} \kappa$, $a_\eta$ is of the form $t_\eta(\bar{x}_\eta, \bar{h}_\eta)$ for some terms $t_\eta\in L_G$, and for some small tuples $\bar{x}_\eta =\bar{x}^\nu_\eta{}^\frown\bar{x}^p_\eta{}^\frown\bar{x}^\iota_\eta \in X$, and $\bar{h}_\eta\in H$.
\par
By Lemma \ref{monochrom lemma}, we may assume $t_\eta = t\in L_G$, and $|\bar{x}^\nu_\eta|, |\bar{x}^p_\eta|, |\bar{x}^\iota_\eta|, |\bar{h}_\eta|$ are constant for all $\eta \in \null^{<\omega} \omega$. 
To obtain handle correspondence, add handles of elements in the tuple $\bar{x}^p_\eta$ to the beginning of $\bar{x}^\nu_\eta$ for all $\eta \in \null^{<\omega} \omega$.
\par
Taking $\varphi'(x,y'):=\varphi(x,t(y'))$ with $|y'|=|\bar{x}_\eta^\frown\bar{h}_\eta|$ and $\bar{b}_\eta := \bar{x}_\eta^\frown\bar{h}_\eta$, we have $\varphi'\in L_G$ and the tree $(\bar{b}_\eta \,|\, \eta \in \null^{<\omega} \omega )$ still satisfy TP$_1$.
\par
By modeling property of strong indiscernibility and compactness, we can find $(\bar{c}_\eta \,|\, \eta \in \null^{<\kappa} \kappa )$ with $\bar{c}_\eta = \bar{y}_\eta^\frown\bar{m}_\eta = \bar{y}^\nu_\eta{}^\frown\bar{y}^p_\eta{}^\frown\bar{y}^\iota_\eta{}^\frown\bar{m}_\eta$ to be a strongly indiscernible tree where $(\bar{c}_\eta \,|\, \eta \in \null^{<\omega} \omega )$ based on $(\bar{b}_\eta | \eta \in \null^{<\omega} \omega )$. 
Note $\varphi'$ and the tree still witness TP$_1$. Also, by \ref{ext in MG}(2), we can assume each $\bar{y}_\eta$ and $\bar{m}_\eta$ are in some $Y$ and $M$ where $Y$ is a transversal of $G$ and $M$ is an independent set in $Z(G)$ and $G=\langle Y \rangle \times \langle M\rangle$.
Let $c$ be a realization of $\bigwedge_{\alpha<\kappa} \varphi'(x,\bar{c}_{0^\alpha})$.
Write $c=s(y,m)$ for some terms $s\in L_G$, and for some tuples $y =y^\nu{}^\frown y^p{}^\frown y^\iota \in Y$, and $m\in M$. 
Again, To obtain handle correspondence, add handles of elements in the tuple $y^p$ to the beginning of $y^\nu$.
Take $\psi(x',y')=\varphi'(s(x'),y')$, then for all $\eta\bot\nu\in\null^{<\kappa}\kappa$, $\{\psi(x',\bar{c}_\eta),\psi(x',\bar{c}_\nu)\}$ is inconsistent and $y^\frown m$ realizes $\bigwedge_{\alpha<\kappa} \psi(x',\bar{c}_{0^\alpha})$. Since $y^\frown m\cap\bigcup\{\bar{c}_{0^\alpha}\,|\,\alpha<\kappa\}$ is finite, we may assume the tree is strongly indiscernible over $y^\frown m\cap\bigcup\{\bar{c}_{0^\alpha}\,|\,\alpha<\kappa\}$.
\par
Now, consider $y^\nu$ and the tree $(\bar{y}_\eta^\nu \,|\, \eta \in \null^{<\kappa} \kappa )$ in $Y^\nu$.
Applying \ref{interpretation in Mekler gp}, we can regard the elements as vertices of a graph. This graph satisfies NTP$_1$ theory Th$(C)$, so there is some $\gamma$ satisfying \ref{ntp1 main cor}(2).
Then for each $\beta^+>\gamma$, we have a tuple ${y'}^\nu$ such that tp$_\Gamma(y^\nu/\bar{y}^\nu{}_{{0^\beta}^\frown 0})$=tp$_\Gamma({y'}^\nu/\bar{y}^\nu{}_{{0^\beta}^\frown 0})$ and tp$_\Gamma(\bar{y}^\nu{}_{{0^\beta}^\frown0 }/{y'}^\nu)$=tp$_\Gamma(\bar{y}^\nu{}_{{0^\beta}^\frown 1}/{y'}^\nu)$.
\par
On the other hand, observe that Th$(\langle M\rangle)$ is a theory of vector spaces, so that the theory is stable and has quantifier elimination. 
Then for tuple $m$ and the tree $(\bar{m}_\eta \,|\, \eta \in \null^{<\omega} \omega )$ in $\langle M\rangle$, we can apply Corollary \ref{ntp1 main cor} to have some $\gamma'$ satisfying \ref{ntp1 main cor}(2).
\par
Fix a successor ordinal $\beta^+$ larger then $\gamma$ and $\gamma'$, and let ${y'}^\nu$ and $m'$ be the tuples given by Corollary \ref{ntp1 main cor}. Recall that the tree $(\bar{y}_\eta^\frown\bar{m}_\eta \, |\, \eta \in \null^{<\omega} \omega )$ is strongly indiscernible over $y^\frown m\cap\bigcup\{\bar{c}_{0^\alpha}\,|\,\alpha<\omega\}$. So, as in \cite[Theorem 5.6]{Chernikov Hempel}, we can find a handle preserving bijection which can be extended by Fact \ref{ext in MG}(1) to have
 \begin{enumerate}
    \item tp$_G(ym/\bar{y}_{{0^\beta}^\frown 0}\bar{m}_{{0^\beta}^\frown 0})$=tp$_G(y'm'/\bar{y}_{{0^\beta}^\frown 0}\bar{m}_{{0^\beta}^\frown 0})$, and
    \item tp$_G(\bar{y}_{{0^\beta}^\frown0 }\bar{m}_{{0^\beta}^\frown0 }/y'm')$=tp$_G(\bar{y}_{{0^\beta}^\frown 1}\bar{m}_{{0^\beta}^\frown 1}/y'm')$.
 \end{enumerate}
From these conditions, we have $G\models\psi(y'm',\bar{y}_{{0^\beta}^\frown 0}\bar{m}_{{0^\beta}^\frown 0})\wedge\psi(y'm',\bar{y}_{{0^\beta}^\frown 1}\bar{m}_{{0^\beta}^\frown 1})$, but this contradicts that for any $\eta\bot\nu\in\null^{<\kappa} \kappa$,  $\{\psi(x',\bar{y}_\eta\bar{m}_\eta),\psi(x',\bar{y}_\nu\bar{m}_\nu)\}$ is inconsistent.

\end{proof}

\begin{Thm}
For any infinite nice graph C, Th(G(C)) is NSOP$_1$ if and only if Th(C) is NSOP$_1$.
\end{Thm}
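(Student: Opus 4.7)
The plan is to follow the proof of Theorem \ref{ntp1 preservation} almost verbatim, replacing strongly indiscernible trees by indiscernible SOP$_1$-arrays and Proposition \ref{ntp1 main cor} by its SOP$_1$-analogue, Proposition \ref{nsop1 main cor}. Since $C$ is interpretable in $G(C)$, the forward direction is immediate: any SOP$_1$ formula over $C$ lifts to one over $G(C)$. For the converse, fix a sufficiently large regular $\kappa > 2^{|T|}$, a monster $G \models $ Th$(G(C))$ with transversal $X$ and decomposition $G = \langle X \rangle \times \langle H \rangle$, and suppose for contradiction that some $\varphi(x,y)$ and tree $(a_\eta)_{\eta \in \null^{<\kappa}2}$ witness SOP$_1$ in $G$.

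I would first run the reduction used in Theorem \ref{ntp1 preservation}, adapted to SOP$_1$. Writing $a_\eta = t_\eta(\bar{x}_\eta, \bar{h}_\eta)$ with $\bar{x}_\eta = \bar{x}^\nu_\eta{}^\frown \bar{x}^p_\eta{}^\frown \bar{x}^\iota_\eta \in X$ and $\bar{h}_\eta \in H$, Lemma \ref{monochrom lemma}(2) produces a monochromatic subtree on $\null^{<\omega}2$ with constant term $t$ and constant arities; prepending handles of the $\bar{x}^p$-entries to $\bar{x}^\nu_\eta$ secures the handle-correspondence hypothesis of Fact \ref{ext in MG}(1). Set $\varphi'(x, y') := \varphi(x, t(y'))$. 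Extracting an SOP$_1$-array from the resulting tree via Fact \ref{sop1 and comb str} and then applying Ramsey, compactness, and Fact \ref{ext in MG}(2), I obtain an indiscernible SOP$_1$-array $(\bar{c}_{i,j})_{i<\kappa, j<2}$ with $\bar{c}_{i,j} = \bar{y}_{i,j}^\frown \bar{m}_{i,j}$, $\bar{y}_{i,j} \in Y$, $\bar{m}_{i,j} \in M$, $G = \langle Y \rangle \times \langle M \rangle$, still witnessing SOP$_1$ for $\varphi'$. Picking $c$ realizing $\bigwedge_{i<\kappa} \varphi'(x, \bar{c}_{i,0})$, writing $c = s(y, m)$ with $y = y^\nu{}^\frown y^p{}^\frown y^\iota \in Y$ and $m \in M$, and setting $\psi(x', y') := \varphi'(s(x'), y')$, I reduce to the situation where $y^\frown m$ realizes $\{\psi(x', \bar{c}_{i,0})\}_{i<\kappa}$, the set $\{\psi(x', \bar{c}_{i,1})\}_{i<\kappa}$ is $2$-inconsistent, and the array remains indiscernible over the finite intersection of $y^\frown m$ with the parameters.

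Now the sub-array $(\bar{y}^\nu_{i,j})$ is an indiscernible SOP$_1$-array in $\Gamma(G) \models $ Th$(C)$, which is NSOP$_1$; so Proposition \ref{nsop1 main cor} yields $\gamma<\kappa$ such that for each $\beta>\gamma$ there is $y'^\nu$ with tp$_\Gamma(y^\nu, \bar{y}^\nu_{\beta,0}) = $ tp$_\Gamma(y'^\nu, \bar{y}^\nu_{0,1})$ and $(\bar{y}^\nu_{i,1})_{i<\omega}$ indiscernible over $y'^\nu$. Since Th$(\langle M \rangle)$ is a theory of $\mathbb{F}_p$-vector spaces, hence stable and therefore NSOP$_1$, the same proposition applied to $(\bar{m}_{i,j})$ and $m$ gives $\gamma'<\kappa$ and, for each $\beta>\gamma'$, a matching $m'$. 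Fixing $\beta$ beyond both, Fact \ref{ext in MG}(1) glues $y'^\nu$, $y^p$, $y^\iota$, and $m'$ into a single tuple $y''m''$ realizing the joint type, over which $(\bar{c}_{i,1})_{i<\omega}$ remains indiscernible and with tp$_G(y^\frown m, \bar{c}_{\beta,0}) = $ tp$_G(y''m'', \bar{c}_{0,1})$; then $\models \psi(y''m'', \bar{c}_{0,1})$ and indiscernibility forces $\models \psi(y''m'', \bar{c}_{1,1})$, contradicting $2$-inconsistency. The main obstacle is the gluing step: one must simultaneously realize the graph-theoretic and vector-space conclusions of Proposition \ref{nsop1 main cor} while preserving handles, proper/improper status, and the independence data of Fact \ref{ext in MG}. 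A secondary subtlety is that the asymmetric SOP$_1$-array condition $\bar{c}_{i,0} \equiv_{\bar{c}_{<i,0}\bar{c}_{<i,1}} \bar{c}_{i,1}$ must be verified to survive monochromaticity, tree-to-array conversion, and projection to the graph and vector-space reducts.
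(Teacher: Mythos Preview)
Your proposal is correct and follows essentially the same route as the paper's proof. The one point the paper handles in detail is precisely the gluing obstacle you flag: rather than reusing $y^p$ and $y^\iota$, it constructs new tuples $y'^p$, $y'^\iota$ coordinate-by-coordinate (sending elements lying in $\bar{y}_{\beta,0}$ to their counterparts in $\bar{y}_{0,1}$, and choosing fresh elements with the correct handle otherwise), and then verifies that the resulting bijection $\bar{y}_{0,1}y' \to \bar{y}_{1,1}y'$ is well-defined using the indiscernibility of the array over the finite overlap $y^\frown m \cap \bigcup_i \bar{c}_{i,0}$.
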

\begin{proof}
Since $C$ is interpretable in $G(C)$, if Th$(C)$ has SOP$_1$, then Th$(G(C))$ also has SOP$_1$.
\par
Suppose Th$(C)$ is NSOP$_1$ and Th$(G(C))$ has SOP$_1$. Again, we take $G$ to be a monster model of Th$(G(C))$, $X$ to be a transversal of $G$ so that $G=\langle X \rangle \times \langle H\rangle$ for some $H$ as in Fact \ref{factorization of Mekler gp}. 
We have a formula $\varphi(x,y)$ and a tree $(a_\eta | \eta \in \null^{<\kappa} 2 )$ of finite tuples in $G$ for some sufficiently large regular cardinals $\kappa$. 
Note that for each $\eta \in \null^{<\kappa} 2$, $a_\eta$ is of the form $t_\eta(\bar{x}_\eta, \bar{h}_\eta)$ for some terms $t_\eta\in L_G$, and for some small tuples $\bar{x}_\eta =\bar{x}^\nu_\eta{}^\frown\bar{x}^p_\eta{}^\frown\bar{x}^\iota_\eta \in X$, and $\bar{h}_\eta\in H$.
\par
By Lemma \ref{monochrom lemma}, we may assume $t_\eta = t\in L_G$, and $|\bar{x}^\nu_\eta|, |\bar{x}^p_\eta|, |\bar{x}^\iota_\eta|, |\bar{h}_\eta|$ are constant for all $\eta \in \null^{<\kappa} 2$.
To obtain handle correspondence, add handles of elements in the tuple $\bar{x}^p_\eta$ to the beginning of $\bar{x}^\nu_\eta$ for all $\eta \in \null^{<\kappa} 2$.
\par
Taking $\varphi'(x,y'):=\varphi(x,t(y'))$ with $|y'|=|\bar{x}_\eta^\frown\bar{h}_\eta|$ and $\bar{a}'_\eta := \bar{x}_\eta^\frown\bar{h}_\eta$, we have a formula $\varphi'\in L_G$ and the tree $(\bar{a}'_\eta | \eta \in \null^{<\kappa} 2 )$ also witness SOP$_1$.
\par
By Fact \ref{comb in tree}, we can choose an array $(\bar{b}_{i,j})_{i<\omega, j<2}$ in $(\bar{a}'_\eta \,|\, \eta \in \null^{<\omega} \omega )$ such that
 \begin{enumerate}
     \item $\bar{b}_{i,0}\equiv_{\bar{b}_{<i,0}\bar{b}_{<i,1}} \bar{b}_{i,1}$ for all $i<\omega$,
     \item $\{\varphi'(x;\bar{b}_{i,0})\,|\, i<\omega \}$ is consistent, and
     \item $\{\varphi'(x,\bar{b}_{i,1})\,|\, i<\omega \}$ is 2-inconsistent.
 \end{enumerate}
Consider an indiscernible sequence $(\bar{c}_{i,0}\bar{c}_{i,1})_{i<\kappa}$ realizing  EM$((\bar{b}_{i,0}\bar{b}_{i,1})_{i<\omega})$. 
The new array $(\bar{c}_{i,j})_{i<\kappa, j<2}$ is an indiscernible SOP$_1$-array and satisfies aforementioned conditions.
Also, $\bar{c}_{i,j}$ is of the form $\bar{y}_{i,j}^\frown\bar{m}_{i,j}$ and there are some $Y$ and $M$ such that $Y$ is a transversal of $G$ extended from $\{\bar{y}_{i,j}\,|\,i<\kappa, j<2\}$, $M$ is an independent set in $Z(G)$ containing $\{\bar{m}_{i,j}\,|\,i<\kappa, j<2\}$, and $G=\langle Y\rangle\times\langle M\rangle$. 
\par
Let $c$ be a realization of $\bigwedge_{i<\kappa} \varphi'(x,\bar{c}_{i,0})$.
Write $c=s(y,m)$ for some terms $s\in L_G$, and for some tuples $y =y^\nu{}^\frown y^p{}^\frown y^\iota \in Y$, and $m\in M$. 
Again, To obtain handle correspondence, add handles of elements in the tuple $y^p$ to the beginning of $y^\nu$.
Take $\psi(x',y')=\varphi'(s(x'),y')$, then for all $i<j<\kappa$, $\{\psi(x',\bar{c}_{i,1}),\psi(x',\bar{c}_{j,1})\}$ is inconsistent and $y^\frown m$ realizes $\bigwedge_{i<\kappa} \psi(x',\bar{c}_{i,0})$. 
Since $y^\frown m\cap\bigcup\{\bar{c}_{i,0}\,|\,i<\kappa\}$ is finite, we may assume the array is an SOP$_1$-array indiscernible over $y^\frown m\cap\bigcup\{\bar{c}_{i,0}\,|\,i<\kappa\}$.
\par
Now, consider $y^\nu$ and the array $(\bar{y}_{i,j}^\nu | i<\kappa, j<2 )$ in $Y^\nu$.
Applying \ref{interpretation in Mekler gp}, we can regard the elements as vertices of a graph. This graph is a model of NSOP$_1$ theory Th$(C)$, so there is some $\gamma$ satisfying \ref{nsop1 main cor}(2).
Then for each $\beta>\gamma$, we have a tuple ${y'}^\nu$ such that tp$_\Gamma(y^\nu/{\bar{y}^\nu}_{i,0})$=tp$_\Gamma({y'}^\nu/{\bar{y}^\nu}_{0,1})$ and tp$_\Gamma({\bar{y}^\nu}_{0,1}/{y'}^\nu)$=tp$_\Gamma({\bar{y}^\nu}_{1,1}/{y'}^\nu)$.
\par
On the other hand, observe that Th$(\langle M\rangle)$ is a theory of vector spaces, so that the theory is stable and has quantifier elimination. 
Then for the tuple $m$ and the array $(\bar{m}_{i,j}^\nu | i<\omega, j<2 )$ in $\langle M\rangle$, we can apply Corollary \ref{nsop1 main cor} to have some $\gamma'$ satisfying \ref{nsop1 main cor}(2).
\par
Fix some ordinal $\beta$ larger then $\gamma$ and $\gamma'$, and let ${y'}^\nu$ and $m'$ be the tuples given by \ref{nsop1 main cor}. We will find a handle preserving bijection which can be extended by Fact \ref{ext in MG}(1) to have
 \begin{enumerate}
    \item tp$_G(ym/\bar{y}_{\beta,0}\bar{m}_{\beta,0})$=tp$_G(y'm'/\bar{y}_{0,1}\bar{m}_{0,1})$, and
    \item tp$_G(\bar{y}_{0,1}\bar{m}_{0,1}/y'm')$=tp$_G(\bar{y}_{1,1}\bar{m}_{1,1}/y'm')$,
 \end{enumerate}
for some tuple $y'={y'}^\nu {y'}^p {y'}^\iota$.
\par
 From the above two type equivalence, we have $G\models\psi(y'm',\bar{y}_{0,1}\bar{m}_{0,1})\wedge\psi(y'm',\bar{y}_{1,1}\bar{m}_{1,1})$, and this contradicts that for any $i<j<\kappa$,  $\{\psi(x',\bar{y}_{i,1}\bar{m}_{i,1}),\psi(x',\bar{y}_{j,1}\bar{m}_{j,1})\}$ is inconsistent.
\par
We finish our proof by illustrating the way to find tuple $y'={y'}^\nu {y'}^p {y'}^\iota$. For a finite tuple $\bar{z}$ and a natural number $k$, denote $(\bar{z})_k$ to be the $k$-th element of $\bar{z}$.\par
Let ${y}^\iota$ be the tuple of length $l$. For each $i<l$, if $(y^\iota)_i$ is $({\bar{y}^\iota}_{\beta,0})_k$ for some $k$, then choose $y'^\iota_i$ to be $({\bar{y}^\iota}_{0,1})_k$, and if $(y^\iota)_i$ is not in ${\bar{y}^\iota}_{\beta,0}$, then choose $(y'^\iota)_i$ to be any element in $Y^\iota\setminus {\bar{y}^\iota}_{0,1}$.
\par
Let ${y}^p$ be the tuple of length $l'$. For each $i<l'$, either $(y^p)_i$ is in ${\bar{y}^p}_{\beta,0}$ or not. 
Assume first that $(y^p)_i$ is $({\bar{y}^p}_{\beta,0})_k$ for some $k$. Then choose $(y'^p)_i$ to be $({\bar{y}^p}_{0,1})_k$. 
Second, suppose $(y^p)_i$ is not in ${\bar{y}^p}_{\beta,0}$. Recall that $(y^\nu)_i$ is the handle of $(y^p)_i$. We have either $(y^\nu)_i$ is in ${\bar{y}^\nu}_{\beta,0}$ or not.
If $(y^\nu)_i$ is $({\bar{y}^\nu}_{\beta,0})_k$, then choose $(y'^p)_i$ to be any element in $Y^p\setminus {\bar{y}^p}_{0,1}$ whose handle is $({\bar{y}^\nu}_{0,1})_k$.
Otherwise, choose $(y^\nu)_i$ to be any element in $Y^p$ whose handle is $(y'^\nu)_i$.
\par
Mapping each element in $y{\bar{y}}_{\beta,0}$ to ${y'}{\bar{y}}_{0,1}$ by their natural order, we have a bijection which satisfies hypotheses in \ref{ext in MG}(1).
As a result, tp$_G(y/\bar{y}_{\beta,0})$=tp$_G(y'/\bar{y}_{0,1})$.
\par
It remains to check the bijection from $\bar{y}_{0,1}y'$ to $\bar{y}_{1,1}y'$ is well-defined. Suppose the $(\bar{y}_{0,1})_k$ is equal to the $(y')_{k'}$ for some $k$ and $k'$. 
Since tp$_G(y/\bar{y}_{\beta,0})$=tp$_G(y'/\bar{y}_{0,1})$, $(\bar{y}_{\beta,0})_k=(y)_{k'}$.
This element is in $y^\frown m\cap\bigcup\{\bar{c}_{i,0}\,|\,i<\kappa\}$, hence for every $i<\kappa$, $(\bar{y}_{i,0})_k$ is $(y)_{k'}$. 
By the definition of indiscernible SOP$_1$-array, the $(\bar{y}_{i,1})_k$ is $(y)_{k'}$, too.
Thus $(y')_{k'}$, $(\bar{y}_{0,1})_k$, $(\bar{y}_{1,1})_k$, and $(y)_{k'}$ are all the same elements. 
This comes out again if we assume $k$-th element of $\bar{y}_{1,1}$ is equal to the $k'$-th element of $y'$. 
Therefore, we have a well-defined bijection from $\bar{y}_{0,1}y'$ to $\bar{y}_{1,1}y'$.

\end{proof}

\begin{Cor}
 \begin{enumerate}
     \item There is a non-simple NSOP$_1$ pure group theory.
     \item If there is an NSOP$_2$ theory which has SOP$_1$, then there is a pure group theory with the same properties.
 \end{enumerate}
\end{Cor}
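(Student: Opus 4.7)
The plan is to combine the two preservation theorems with Hodges's bi-interpretability. By \cite[Thm 5.5.1]{Hodges}, any finite-language structure with at least two elements is bi-interpretable with a graph; moreover, a standard encoding already used in the Mekler-construction literature (see, e.g., \cite{Chernikov Hempel}) converts this graph into an infinite nice graph while preserving all properties of interest (simplicity, NSOP$_1$, NTP$_1$, SOP$_1$, all of which are closed under interpretation). Combined with the fact that $C$ is interpretable in $G(C)$ via Fact \ref{interpretation in Mekler gp}, this lets any formula-level tree property on the graph side transfer up to $G(C)$.

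For (1), I would take any finite-language non-simple NSOP$_1$ theory $T$ (several such examples are known in the literature). Routing $T$ through the pipeline above yields an infinite nice graph $C$ whose theory is non-simple and NSOP$_1$. Let $G(C)$ be its Mekler group. The NSOP$_1$ preservation theorem proved above gives that Th$(G(C))$ is NSOP$_1$. For non-simplicity, since $C$ is interpretable in $G(C)$ by Fact \ref{interpretation in Mekler gp} and simplicity descends along interpretations, the non-simplicity of Th$(C)$ forces Th$(G(C))$ to be non-simple.

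For (2), assume an NSOP$_2$ theory with SOP$_1$ of finite language exists. The same pipeline produces an infinite nice graph $C$ whose theory is both NTP$_1$ (equivalently NSOP$_2$) and SOP$_1$. Theorem \ref{ntp1 preservation} then yields that Th$(G(C))$ is NTP$_1$, i.e., NSOP$_2$. Since $C$ is interpretable in $G(C)$ and SOP$_1$ is closed under interpretation (alternatively, one can invoke the NSOP$_1$ preservation theorem applied to $C$ having SOP$_1$), Th$(G(C))$ also has SOP$_1$.

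The essentially only technical verification is the passage from Hodges's bi-interpreted graph to an infinite nice graph while preserving the relevant tree properties; this relies on the standard encoding in the Mekler literature rather than any new combinatorics, so I do not expect a serious obstacle there. All the substantive model-theoretic content has already been absorbed into Theorem \ref{ntp1 preservation} and the NSOP$_1$ preservation theorem above.
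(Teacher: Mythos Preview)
Your proposal is correct and follows essentially the same route as the paper: start from a finite-language example, pass to a nice graph via Hodges (the paper cites \cite[Thm.~5.5.1 and Exercise~5.5.9]{Hodges} for the nice-graph step where you appeal to the encoding in \cite{Chernikov Hempel}), and then apply the two preservation theorems. The only cosmetic difference is that for non-simplicity of $G(C)$ you argue directly via the interpretability of $C$ in $G(C)$, whereas the paper phrases it as ``Mekler's construction preserves simplicity''; these amount to the same easy direction. For (2) both you and the paper tacitly reduce to a finite-language theory, which is harmless since SOP$_1$ is witnessed by a single formula and NSOP$_2$ passes to reducts.
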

\begin{proof}
(1) Fix a structure $M$ of finite language such that Th$(M)$ is non-simple and NSOP$_1$. 
Then by \cite[Theorem 5.5.1, Exercise 5.5.9]{Hodges}, there is a nice graph $C$ bi-interpretable with $M$. 
Since Mekler's construction preserves simplicity and NSOP$_1$, the theory of Mekler group of $C$ is non-simple and NSOP$_1$, too.
\par
(2) Follow the same argument above.
\end{proof}


\begin{thebibliography}{1}
\bibitem{Baudisch} 
Andreas Baudisch, 
\textit{Mekler's construction preserves CM-triviality}, 
Annals of Pure and Applied Logic \textbf{155} (2002), no. 1-3, 115-173.

\bibitem{Chernikov ntp2} 
Artem Chenikov, 
\textit{Theories without the tree property of the second kind}, 
Annals of Pure and Applied Logic \textbf{165} (2014), no. 2, 695-723.

\bibitem{Chernikov Hempel} 
Artem Chenikov, Nadja Hempel, 
\textit{Mekler's construction and generalized stability}, 
Israel Journal of Mathematics, \textbf{230} (2019), 745-769.

\bibitem{Chernikov Ramsey} 
Artem Chenikov, Nicholas Ramsey, 
\textit{On model-theoretic tree properties}, 
Journal of Mathematical Logic, 16 (2), (2016).

\bibitem{Dzamonja Shelah} 
Mirna džamonja, Saharon Shelah, 
\textit{On $\triangleleft^*$-maximality} 
Annals of Pure and Applied Logic \textbf{125} (2004), 119-158.

\bibitem{Hodges} 
Wilfrid Hodges, 
\textit{Model theory}, Vol. 42, 
Cambridge University Press, 1993.

\bibitem{Kaplan Ramsey}
Itay Kaplan, Nicholas Ramsey,
\textit{On Kim-Independence}, 
Journal of the European Mathematical Society, 2017, accepted


\bibitem{KKS} 
Byunghan Kim, Hyeung-Joon Kim, Lynn Scow, 
\textit{Tree indiscernibilities, revisited}, 
Archive for Math. Logic, 53 (2014), 211-232

\bibitem{Mekler} 
Alan H Mekler, 
\textit{Stability of nilpotent groups of class 2 and prime exponent}, 
Journal of Symbolic Logic (1981), 781-788.

\bibitem{Shelah simple}
Saharon Shelah,
\textit{Simple Unstable Theories}, 
Annals of math. Logic 19 (1980), 177-203.

\bibitem{Shelah class} 
Saharon Shelah, 
\textit{Classification theory: and the number of non-isomorphic models}, Vol. 92, 
Elsevier, 1990.

\bibitem{Takeuchi Tsuboi} 
Kota Takeuchi, Akito Tsuboi,  
\textit{On the Existence of Indiscernible Trees}, 
Annals of Pure and Applied Logic \textbf{163} (12), 1891-1902, (2012).

\end{thebibliography}
\end{document}